\documentclass[12pt,a4paper,reqno]{amsart}
\usepackage[a4paper,left=30mm,right=30mm,top=36mm,bottom=36mm]{geometry}
\theoremstyle{definition}
\newtheorem{dfntn}{Definition}[section]
\newtheorem{rmrk}[dfntn]{Remark}
\newtheorem{thrm}[dfntn]{Theorem}
\newtheorem{lmm}[dfntn]{Lemma}
\newtheorem{prop}[dfntn]{Proposition}
\newtheorem{cor}[dfntn]{Corollary}
\newtheorem{eg}[dfntn]{Example}
\newtheorem{notation}[dfntn]{Notation}
\numberwithin{equation}{section}

\usepackage{amssymb}
\usepackage{amsmath}
\usepackage{graphicx}
\usepackage{float}
\usepackage[utf8]{inputenc}
\usepackage[english]{babel}
\usepackage{subfig}
\usepackage{mathtools}
\newcommand{\R}{\mathbb{R}}
\newcommand{\N}{\mathbb{N}}
\renewcommand{\div}{\mathrm{div}}

\usepackage{setspace}  
\usepackage{accents}   
\usepackage{cancel}    
\allowdisplaybreaks    
\usepackage[colorlinks, bookmarksnumbered, bookmarks, linkcolor=red]{hyperref}
\usepackage{upgreek}

\newcommand{\op}[1]{#1}               
\newcommand{\hilbert}[1]{#1}          

\DeclareMathOperator{\grad}{grad}     
\DeclareMathOperator{\Grad}{Grad}
\DeclareMathOperator{\Div}{Div}
\DeclareMathOperator{\Rot}{Curl}
\DeclareMathOperator{\rot}{curl}
\DeclareMathOperator{\skw}{skw}
\DeclareMathOperator{\adjugate}{adj}
\DeclareMathOperator{\divcirc}{\accentset{\circ}{\div}}
\DeclareMathOperator{\gradcirc}{\accentset{\circ}{\grad}}
\DeclareMathOperator{\Gradcirc}{\accentset{\circ}{\Grad}}
\DeclareMathOperator{\Divcirc}{\accentset{\circ}{\Div}}
\DeclareMathOperator{\Rotcirc}{\accentset{\circ}{\Rot}}

\DeclareMathOperator{\Span}{span}

\begin{document}

\onehalfspacing

\title[Revisiting Picard's proof of Picard--Weber--Weck selection theorem]{Revisiting Picard's proof of the Picard--Weber--Weck selection theorem}

\author[Y.-S. Lim]{Yi-Sheng Lim}
\address[Yi-Sheng Lim]{Department of Mathematics, Texas A{\&}M University, College Station, TX 77843, USA.}
\email{yishenglimysl@tamu.edu}

\author[T. Sprekeler]{Timo Sprekeler}
\address[Timo Sprekeler]{Department of Mathematics, Texas A{\&}M University, College Station, TX 77843, USA.}
\email{timo.sprekeler@tamu.edu}

\author[M. Waurick]{Marcus Waurick}
\address[Marcus Waurick]{Institut f\"{u}r Angewandte Analysis, Technische Universit\"{a}t Bergakademie Freiberg, Germany.}
\email{marcus.waurick@math.tu-freiberg.de}

\subjclass[2020]{46E35, 47F05, 58J10}
\keywords{Maxwell compactness property, Gaffney estimate, weak Lipschitz domains}
\date{\today}

\begin{abstract} Revisiting the rationale provided by Picard in the seminal paper \cite{Picard1984}, we provide an independent, self-contained proof of the Maxwell compactness property for weak Lipschitz domains in the Euclidean setting, detouring technical complications like the differential forms setting, Gaffney's inequality for smooth domains, Calderon's extension theorem or regularity theory for PDEs, or systems thereof, on smooth domains that are used in this context. As a by-product we provide an independent proof of the classical Gaffney estimate for cubes using only $L^2$-completeness of the Fourier bases.
\end{abstract}

\maketitle

\section{Introduction}
This paper is concerned with a by-now classical compactness result for differential forms going back to several articles by Weck \cite{Weck1974}, Weber \cite{Weber1980}, and Picard \cite{Picard1984}. The main contribution of all of these results is that, for the sake of the argument expressed in dimension $d=3$, the embedding
\begin{equation}\label{eq:intro1}
   H_0(\rot;\Omega)\cap H(\div;\Omega)\hookrightarrow L^2(\Omega)^3
\end{equation}
is compact for a broad class of rather nonsmooth bounded domains $\Omega\subseteq \R^3$. Previously, $\Omega$ was required to be at least a $C^{1,1}$-domain in which case the continuous embedding
\[
   H_0(\rot;\Omega)\cap H(\div;\Omega) \hookrightarrow H^1(\Omega)^3,
\]
provided by a Gaffney-type estimate together with the classical Rellich--Kondrachov selection theorem, would yield the compactness statement for \eqref{eq:intro1}; see, e.g., \cite[Theorem 8.6]{Leis1986} for a mere historical reference and a proof for $C^5$-domains. Weck \cite{Weck1974} was the first to bypass Gaffney's estimate and still recover compactness of \eqref{eq:intro1} for certain singular domains with a bootstrapping argument in the dimension of the domain. Later, in \cite{Weber1980}, Weber provided a perspective on the compactness of \eqref{eq:intro1} with the main technical ingredient of Calderon's extension theorem. We note in passing that this is also the circle of ideas in \cite{BPS16}, tied together with regular potentials. Here we shall revisit the seminal work \cite{Picard1984} of Picard and the corresponding proof of the Maxwell compactness property. The key for \cite{Picard1984} to work is two-fold: the tailored setting provided by differential forms and a Gaffney estimate for the cube. Even though the proof in \cite{Picard1984} is very elegant and comparatively simple, the main ingredients might be rather inaccessible to many due to either the rather involved machinery of differential forms or the elusive Gaffney estimate. Even though well-known, the proof of the latter inequality appears to be technically demanding in many textbooks and articles, being based on an integration-by-parts formula involving the curvature of the boundary of the underlying domain; see, e.g., \cite{ABD98,Cos91,Gri85,Pau19,Sar82}. In any case some more theory needs to be appended, which is rooted in regularity theory for solutions of PDEs, or systems thereof, on smooth/convex domains; see, e.g., the proof in \cite{Mitrea2001} or the rather recent work \cite{PaulyWaurick2026}. 

Thus, an independent proof without a rather strong background in both differential forms and regularity theory for PDEs appears to be missing in the literature. Hence, here we provide a proof of the compactness of \eqref{eq:intro1} for weak Lipschitz domains without resorting to the Calderon extension theorem, regularity theory for PDE systems or a demanding integration by parts formula. In fact, by using functional analytic (`soft') arguments mainly, we reduce the amount of explicit computations to a minimum and, thus, only by using $L^2$-completeness of the Fourier basis functions, we shall establish Gaffney's estimate on the cube and then close the elementary argument for the compactness statement via explicit transformation rules for the Euclidean case, not needing to resort to the calculus of differential forms. The argument being essentially the same for any dimension, we thus provide an independent proof of the following statement; for the explicit definition of `weak Lipschitz', we refer to the main body of the present text.

\begin{thrm}[Picard--Weber--Weck selection theorem]\label{thrm:mtintro} Let $\Omega\subseteq \R^d$ be a bounded, weak Lipschitz domain. Then, the embedding
\[
    H_0(\Rot;\Omega)\cap H(\div;\Omega)\hookrightarrow L^2(\Omega)^d
\]
is compact.
\end{thrm}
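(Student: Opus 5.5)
We follow Picard's strategy: localize, flatten each boundary patch onto a cube, and use a Gaffney estimate on the cube together with Rellich's theorem. Here $\Rot$ denotes the $d$-dimensional curl, i.e.\ the skew-symmetric part of the Jacobian, $\Rot u = \Grad u - (\Grad u)^\top$ up to normalisation, and $H_0(\Rot;\Omega)$ is the closure of $C_c^\infty(\Omega)^d$ in the graph norm.

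\textbf{Step 1: Gaffney on the cube.} Let $Q=(0,\pi)^d$. For $u$ in $H_0(\Rot;Q)\cap H(\div;Q)$ we aim to show $u\in H^1(Q)^d$ with
\[
\|\Grad u\|_{L^2}^2\le \|\Rot u\|_{L^2}^2+\|\div u\|_{L^2}^2 .
\]
The tangential boundary condition suggests a mixed Fourier expansion. The $j$-th component $u_j$ is expanded in $\cos$ in the variable $x_j$ and in $\sin$ in the variables $x_k$, $k\neq j$. These families are complete orthogonal bases of $L^2(Q)$.

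To compute the coefficients of $\partial_k u_j$, we test against smooth functions. Integration by parts is legitimate because it only uses the weak definitions of $\Rot$ (with vanishing tangential trace, tested against arbitrary smooth fields) and of $\div$ (tested against $H^1_0$ functions). This yields explicit coefficient identities. Parseval's identity then reduces the inequality to an algebraic identity for each frequency $n$:
\[
|n|^2|\hat u(n)|^2 = |n\wedge \hat u(n)|^2 + |n\cdot \hat u(n)|^2 .
\]
This holds with equality. The delicate point is justifying that the divergence coefficients of $u$ are obtained by pairing with the dual basis, since $u$ satisfies no normal boundary condition. The way around this is to define the candidate $\partial_k u_j$ via the coefficients, prove that its $\ell^2$-sum is finite, and then identify it as the distributional derivative.

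\textbf{Step 2: localisation and transport.} Cover $\overline\Omega$ by finitely many open sets $U_0,\dots,U_N$. Here $U_0\Subset\Omega$, and for $i\ge1$ there is a bi-Lipschitz map $\Phi_i$ from the cube (or a suitable half-cube, doubled by reflection onto $Q$) onto $U_i$ that maps the boundary face to $\partial\Omega\cap U_i$; this is what weak Lipschitz means. Take a subordinate partition of unity $\chi_i$.

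For a bounded sequence $(u^m)$ in $H_0(\Rot;\Omega)\cap H(\div;\Omega)$, the fields $\chi_i u^m$ stay bounded in the same space, because the product rules produce only $L^2$ lower-order terms. For $i\ge1$ we pull back. The covariant transform $\Phi^*u=(D\Phi)^\top u\circ\Phi$ preserves $H_0(\Rot)$ and satisfies $\Rot\Phi^*u=\Phi^*_{2}\Rot u$. The contravariant Piola transform $\det(D\Phi)(D\Phi)^{-1}u\circ\Phi$ preserves $H(\div)$. These two transforms do not coincide, so after pulling back the field lies in $H_0(\Rot;Q)\cap \varepsilon^{-1}H(\div;Q)$ with $\varepsilon=(D\Phi)^\top D\Phi/\det D\Phi\in L^\infty$, which is merely uniformly positive definite. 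This is the main obstacle: Step 1 does not apply directly to $\varepsilon$-divergence.

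\textbf{Step 3: removing $\varepsilon$ by a Helmholtz decomposition.} The way to handle this, as in Picard, is a Hodge/Helmholtz decomposition on $Q$ with respect to the weighted inner product $\langle \varepsilon\cdot,\cdot\rangle$. Write
\[
v=\nabla\phi+w,\qquad \phi\in H^1_0(Q),\qquad \div(\varepsilon w)=0 .
\]
The gradient parts are compact: $\div(\varepsilon\nabla\phi^m)$ is bounded in $L^2$, hence in $H^{-1}$, so $\phi^m$ is bounded in $H^1_0$. Rellich then gives $\phi^m$ convergent in $L^2$, and the energy identity
\[
\langle\varepsilon\nabla(\phi^m-\phi^k),\nabla(\phi^m-\phi^k)\rangle=-\langle\div\varepsilon\nabla(\phi^m-\phi^k),\phi^m-\phi^k\rangle
\]
upgrades this to convergence of $\nabla\phi^m$ in $L^2$.

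For the solenoidal parts, use the standard symmetry/duality argument: compactness of $H_0(\Rot)\cap\varepsilon^{-1}H(\div)$ is equivalent, via the orthogonal decompositions, to compactness of the range embedding for $\Rot$ restricted to $\varepsilon$-solenoidal fields. This in turn follows from the $\varepsilon=1$ case, i.e.\ Step 1 plus Rellich. The reason is that $\ker\Rot$ fields in $H_0(\Rot;Q)$ are gradients of $H^1_0$ potentials on the contractible cube, so the $\varepsilon$-orthogonal and $1$-orthogonal solenoidal complements are related by a bounded isomorphism. I expect this bookkeeping, namely transferring compactness between the weighted and unweighted settings, to be the hardest part.

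With all pieces compact, pushing forward and summing over $i$ shows that $(u^m)$ has an $L^2$-convergent subsequence. On the interior patch $U_0$, compactness follows directly from Step 1 applied on a cube containing $\operatorname{supp}\chi_0$ and Rellich's theorem.
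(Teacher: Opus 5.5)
Your argument is correct in outline. It shares the paper's architecture: the same mixed sine/cosine basis on the cube, a partition of unity, the covariant transform for $\Rotcirc$ and the Piola transform for $\div$, and then removal of the resulting weight. However, both key lemmas are proved by a different route.

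\textbf{Gaffney on the cube.} The paper never expands a general $u$. It shows that $\mathcal{S}_\mathcal{C}$ is a common core for $\Rotcirc$ and $\div$, using the core criterion via bijectivity of $I+(\Rotcirc)^\ast\Rotcirc$ on $\mathcal{S}_\mathcal{C}$, then the $V_{\mathcal{D}}$ construction and the invariance $V_{\mathcal{S}_\mathcal{C}}\subseteq\mathcal{S}_\mathcal{C}$. It then checks the identity on $\mathcal{S}_\mathcal{C}$ by integration by parts, with a boundary term that vanishes face by face, and passes to the limit.

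Your coefficient argument bypasses this machinery. Pairing $\Rotcirc u$ with $\cos(n_ix_i)\cos(n_jx_j)\prod_{k\neq i,j}\sin(n_kx_k)\,e_ie_j^\top$ is admissible for $(\Rotcirc)^\ast=-\Div\skw$, since these fields are smooth up to the boundary. Pairing $\div u$ with $\prod_k\sin(n_kx_k)\in H^1_0(Q)$ is likewise admissible. Together they give the coefficient relations, and Lagrange's identity plus Parseval gives the equality. The boundary condition enters only through which test fields are admissible.

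In particular, the step you call delicate is not: the divergence pairing is legitimate exactly because the sine products lie in $H^1_0(Q)$. What does need care is the normalisation when some $n_i=0$, and identifying the coefficient-defined series with the weak derivatives.

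\textbf{The weight.} The paper invokes the abstract Theorem~\ref{thrm:abstractcompact}: three orthogonal projections, finite-dimensionality of $N(A_1)\cap N(A_0^\ast)$, and $\Re\langle aE_n,E_n\rangle\geq c\|E_n\|^2$. That argument is purely Hilbert-space theoretic and takes only the unweighted compactness as input. It even derives compactness of the gradient part from it, rather than from Rellich on $H^1_0(Q)$, and it applies to any pair $A_0,A_1$.

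Your weighted Helmholtz decomposition is the concrete version. It uses Lax--Milgram, Rellich for $H^1_0(Q)$, the energy identity, and the $L^2$-projection onto $N(\div)$ as a bounded isomorphism between $\varepsilon$-solenoidal and solenoidal fields in $H_0(\Rot;Q)$. That isomorphism only uses $R(\gradcirc)\subseteq N(\Rotcirc)$ and coercivity of $\varepsilon$, not contractibility of the cube.

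\textbf{Small slips.}
\begin{itemize}
\item The weight is inverted. With $J=D\Phi$ and $v=J^\top\widetilde u$, the Piola field is $\det J\,J^{-1}\widetilde u=\det J\,J^{-1}J^{-\top}v$. So the weight should be $\det(J)\,J^{-1}J^{-\top}$ (the paper's $a$), the inverse of what you wrote. This is harmless, since only boundedness and uniform positivity are used.
\item No reflection is needed in the localisation. In Definition~\ref{defn:weak_lipschitz}, the inverse chart restricted to $Q=(0,1)^d$ already maps onto $U_i\cap\Omega$, and the cut-off supplies the tangential condition on the remaining faces.
\item The transformation rules you assert still need proof for bi-Lipschitz $\Phi$. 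The paper supplies it by duality and density in Propositions~\ref{prop:rot_composition} and~\ref{prop:div_composition}.
\end{itemize}
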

\begin{rmrk}
(a) The operator $\Rot$ is the (scaled) skew-symmetric part of the Jacobian and $\Rotcirc$ is the same with homogeneous boundary conditions; see Definition \ref{def: Rotop}. Similarly, $\div$ is the divergence operator with maximal $L^2$-domain and $\divcirc$ is the corresponding operator with homogeneous boundary conditions; see Definition \ref{defn:operator_domains}. We write $H_0(\Rot;\Omega):= D(\Rotcirc_\Omega)$ and $H_0(\div;\Omega):= D(\divcirc_\Omega)$.

(b) The corresponding statement with the other boundary conditions, i.e., compactness of
\[
    H(\Rot;\Omega) \cap H_0(\div;\Omega)\hookrightarrow L^2(\Omega)^d,
\]
is also true and follows by the same reasoning as the proof of Theorem \ref{thrm:mtintro}. We shall comment on the necessary adjustments for this case in the places, where they are necessary.
\end{rmrk}

A major application of Gaffney's inequality is in the analysis of mixed finite element methods for Hamilton--Jacobi--Bellman \cite{GSS21,GS19,HMS26,Spr24} and Fokker--Planck--Kolmogorov equations \cite{Spr26,SSZ25}.

This paper is organized as follows. In Section \ref{Sec:2}, we go straight to proving the selection theorem. This will be subject to a Gaffney (in)equality for cubes and a transformation theorem, both of which are proven in the subsequent sections. For the former, some functional analytic results need to be established first; see Section \ref{sect:abstract_framework}. In Section \ref{sect:gaffney_cube}, we present a self-contained proof of the Gaffney result for cubes. Finally, in Section \ref{sec:compatibility_proof}, we provide a proof of the transformation theorem, which concludes the article.

\section{Picard--Weber--Weck selection theorem}\label{Sec:2}

In this section, we provide the desired proof of the selection theorem subject to some auxiliary results to be proven later. We will require the following differential operators.

\begin{dfntn}[gradient and divergence operators]\label{defn:operator_domains}

Let $\Omega\subseteq \R^d$ be open. We define the differential operators
\begin{align*}
    \grad_c&: C_c^\infty(\Omega)\subseteq L^2(\Omega)\rightarrow L^2(\Omega)^d,\quad v\mapsto (\partial_1 v,\ldots,\partial_d v)^\top,\\ 
    \div_c&: C_c^\infty(\Omega)^d\subseteq L^2(\Omega)^d\rightarrow L^2(\Omega),\quad (v_1,\ldots,v_d)^\top\mapsto \sum_{k = 1}^d \partial_k v_k,\\
    \Grad_c&: C_c^\infty(\Omega)^d\subseteq L^2(\Omega)^d\rightarrow L^2(\Omega)^{d\times d},\quad (v_1,\ldots,v_d)^\top\mapsto (\partial_j v_i)_{1\leq i,j\leq d},\\
    \Div_c&: C_c^\infty(\Omega)^{d\times d}\subseteq L^2(\Omega)^{d\times d}\rightarrow L^2(\Omega)^{d},\quad (v_{ij})_{1\leq i,j\leq d}\mapsto (\sum_{j = 1}^d \partial_j v_{ij})_{1\leq i\leq d}.
\end{align*}
We set
\begin{align*}
    \gradcirc := \overline{\grad_c},\qquad  \divcirc := \overline{\div_c},\qquad \Gradcirc := \overline{\Grad_c},\qquad  \Divcirc := \overline{\Div_c},
\end{align*}
and
\begin{align*}
    \grad := -(\divcirc)^\ast,\quad \div := -(\gradcirc)^\ast,\quad \Grad := -(\Divcirc)^\ast,\quad \Div := -(\Gradcirc)^\ast.
\end{align*}
\end{dfntn}

\begin{dfntn}[curl-type operator]\label{def: Rotop}
    Let $\Omega\subseteq \R^d$ be open. We define the differential operator
\begin{align*}
    \Rot_c &:C_c^\infty(\Omega)^d\subseteq L^2(\Omega)^d\rightarrow L^2(\Omega)^{d\times d},\quad  v\mapsto \skw(\Grad_c   v),
\end{align*}
where
\begin{align*}
    \skw:L^2(\Omega)^{d\times d}\rightarrow L^2(\Omega)^{d\times d},\quad V\mapsto \frac{1}{\sqrt{2}}(V-V^\top),
\end{align*}
and we set $\Rotcirc := \overline{\Rot_c}$ and $\Rot := -(\Divcirc \skw)^\ast$.
\end{dfntn}

\begin{notation}
    For a linear operator $\op A$, we shall write $D(\op A)$, $N(\op A)$, and $R(\op A)$ for its domain, kernel, and range, respectively. 
\end{notation}

\begin{rmrk}\label{rmk:adjoints_of_key_operators}
    The adjoint of the operator $\Rotcirc$ is given by $(\Rotcirc)^\ast = -\Div \skw$.
\end{rmrk}

\begin{dfntn}
    Let $\Omega\subseteq \R^d$ be open. We use the following standard notation for Sobolev spaces:
    \begin{alignat*}{3}
        H^1(\Omega) &:= D(\grad),\qquad \qquad\;\; H^1_0(\Omega) &&:= D(\gradcirc),\\
        H(\div;\Omega) &:= D(\div),\qquad \quad \;\,H_0(\div;\Omega) &&:= D(\divcirc),\\
        H(\Rot;\Omega) &:= D(\Rot),\qquad \; H_0(\Rot;\Omega) &&:= D(\Rotcirc).
    \end{alignat*}
    We equip the domains with their respective graph norms, e.g., $(D(\Rot),\|\cdot \|_{H(\Rot;\Omega)})$ with $\| \cdot \|_{H(\Rot;\Omega)} := (\| \cdot \|_{L^2(\Omega)^d}^2 + \| \Rot( \cdot) \|_{L^2(\Omega)^{d\times d}}^2)^{1/2}$.
\end{dfntn}

\begin{dfntn}\label{defn:weak_lipschitz}
    A set $\Omega \subseteq \R^d$ is a \textbf{weak Lipschitz domain} if the boundary $\Gamma = \partial \Omega$ is a Lipschitz submanifold. That is, there exists a finite open cover $U_1, \ldots, U_K \subseteq \R^d$ of $\Gamma$ and maps $\varphi_k : U_k \rightarrow (0,1)^{d-1}\times (-1,1)$ such that for $k \in \{ 1,\ldots, K\}$, $\varphi_k$ is bi-Lipschitz, bounded, regular, and satisfies
    \begin{align*}
        \varphi_k(U_k \cap \Omega ) = (0,1)^{d} =: Q.
    \end{align*}
    That is, $\varphi_k \in C^{0,1}_{b}(\overline{U_k}, [0,1]^{d-1}\times [-1,1])$, $\varphi_k^{-1} \in C^{0,1}_{b}([0,1]^{d-1}\times [-1,1], \overline{U_k})$, and there exists $c>0$ such that the Jacobian $J_{\varphi_k} := \Grad(\varphi_k)$ of $\varphi_k$ satisfies
    \begin{align*}
        \det{J_{\varphi_k}(x)} \geq c \quad \text{for $a.e.$ $x \in U_k$.}
    \end{align*}
\end{dfntn}

Now, the Picard--Weber--Weck selection theorem reads as follows.

\begin{thrm}\label{thrm:mtsec2} Let $\Omega\subseteq \R^d$ be a bounded, weak Lipschitz domain. Then,
\[
  H_0(\Rot;\Omega)\cap H(\div;\Omega) \hookrightarrow L^2(\Omega)^d
\]
compactly.
\end{thrm}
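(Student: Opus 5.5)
We follow Picard's localisation strategy: cut the field into pieces near the boundary and in the interior, pull the boundary pieces back to the cube $Q$, and use a Gaffney-type estimate on $Q$ to get $H^1$-bounds, hence compactness via Rellich.

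\emph{Reduction.} Let $(E_n)_n$ be bounded in $H_0(\Rot;\Omega)\cap H(\div;\Omega)$. It suffices to extract an $L^2$-convergent subsequence. Choose an open set $U_0\Subset\Omega$ with $\Omega\subseteq U_0\cup U_1\cup\dots\cup U_K$, and a smooth partition of unity $(\chi_k)_{k=0}^K$ subordinate to this cover. By the product rules
\[
\Rot(\chi E)=\chi\Rot E+\skw(E\otimes\grad\chi)
\quad\text{and}\quad
\div(\chi E)=\chi\div E+\grad\chi\cdot E
\]
(checked on the level of adjoints), each sequence $(\chi_k E_n)_n$ stays bounded in $H_0(\Rot)\cap H(\div)$, now with support in $U_k$. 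It therefore suffices to treat each $k$ separately.

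\emph{Interior piece.} For $k=0$, the function $\chi_0E_n$ has compact support in $\Omega$. Extended by zero it lies in $H(\Rot;\R^d)\cap H(\div;\R^d)$. By Fourier transform, $|\xi|^2|\hat u|^2=|\xi\cdot\hat u|^2+|\skw(\hat u\otimes\xi)|^2$, so $\|\Grad u\|^2=\|\div u\|^2+\|\Rot u\|^2$. Hence $(\chi_0E_n)_n$ is bounded in $H^1$ with supports in a fixed compact set, and Rellich gives a convergent subsequence.

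\emph{Boundary pieces.} For $k\ge1$, put $\varphi=\varphi_k$ and transport via the covariant and contravariant (Piola) pullbacks:
\[
\tilde E=J_\varphi^{-\top}\ldots,\qquad\text{precisely}\qquad
u:=(J_{\varphi^{-1}})^\top\,(\chi_kE_n)\circ\varphi^{-1}.
\]
For the divergence part we would use $v:=\det(J_{\varphi^{-1}})\,J_{\varphi^{-1}}^{-1}(\chi_kE_n)\circ\varphi^{-1}$. The transformation theorem (to be proven later) should state that $u\in H_0(\Rot;Q)$ with $\Rot u$ the corresponding pulled-back 2-form, and that $v\in H(\div;Q)$ with $\div v=\det(J_{\varphi^{-1}})(\div \chi_kE_n)\circ\varphi^{-1}$, with norms controlled by the Lipschitz constants and $c$. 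The difficulty is that $u$ and $v$ differ, related by $v=Mu$ with $M=\det(J)\,J^{-1}J^{-\top}$, a uniformly positive definite $L^\infty$ matrix. The Gaffney estimate for a Lipschitz-transformed field therefore fails, because $H^1$-regularity is not preserved.

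Here we use Picard's trick, arguing by compactness rather than regularity. We decompose $u$ by a Helmholtz decomposition in $L^2(Q)^d$ with the weighted inner product $\langle M\cdot,\cdot\rangle$:
\[
u=\gradcirc p+w,\qquad w\in N(\div M)\text{-type complement}.
\]
Then we show each part is compact. The gradient part: $p\in H_0^1(Q)$ solves $\div(M\grad p)=\div v$ and is bounded in $H^1_0$. Compactness of $(\gradcirc p_n)$ in $L^2$ follows from the identity
\[
\langle M\grad(p_n-p_m),\grad(p_n-p_m)\rangle=-\langle \div v_n-\div v_m,p_n-p_m\rangle,
\]
since Rellich makes $p_n$ converge in $L^2$. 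For the remainder, $\Rot w=\Rot u$ bounded and $w\in H_0(\Rot;Q)$. By the (unweighted) Gaffney result on the cube (from Section~\ref{sect:gaffney_cube}), $H_0(\Rot;Q)\cap H(\div;Q)\subseteq H^1(Q)$; to apply it we use the unweighted Helmholtz decomposition for $w$ instead, i.e.\ first split $u=\gradcirc q+z$ with $\div z=0$. Then $z$ is bounded in $H^1$ by Gaffney and hence compact, and the gradient part is handled via the weighted argument above (replace $p$ by $q$ and test $\langle M(u_n-u_m),u_n-u_m\rangle$, splitting into $\langle v_n-v_m,\gradcirc(q_n-q_m)\rangle+\langle v_n-v_m,z_n-z_m\rangle$). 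The first term becomes $-\langle\div(v_n-v_m),q_n-q_m\rangle\to0$ by Rellich on $q$, and the second tends to $0$ by compactness of $z$.

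Thus $u_n$ is Cauchy in $L^2(Q)$ along a subsequence, and pushing forward, so is $\chi_kE_n$. We expect the main obstacle to be the rigorous justification of the transformation rules for merely Lipschitz $\varphi$, together with the boundary-condition bookkeeping on the faces of $Q$ where $\varphi(U_k\cap\Omega)$ meets the artificial boundary. The latter is handled because $\chi_k$ makes $u$ vanish near those faces.
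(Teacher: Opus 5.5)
Your proof is correct and follows the paper's overall strategy. You localise by a partition of unity, pull boundary pieces back to $Q$ via $u=J^\top\widetilde{E}$ and $v=(\adjugate J)\widetilde{E}=Mu$ with $M=(\det J)J^{-1}J^{-\top}$ (the paper's $a$), and combine Gaffney on the cube with a compactness argument that absorbs the weight.

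Where you differ is in that last step. The paper invokes the abstract Theorem~\ref{thrm:abstractcompact} with $A_0=\gradcirc$, $A_1=\Rotcirc$. Its proof uses the three-part decomposition $\overline{R(A_0)}\oplus(N(A_1)\cap N(A_0^\ast))\oplus\overline{R(A_1^\ast)}$, shows the middle part is finite-dimensional, and gets compactness of every component from the single hypothesis that $D(A_1)\cap D(A_0^\ast)\hookrightarrow H_1$ is compact. You use only the two-part split $u=\gradcirc q+z$ with $\div z=0$:
\begin{itemize}
\item $z$ lies in $H_0(\Rot;Q)\cap H(\div;Q)$ and is bounded in $H^1(Q)^d$ by Theorem~\ref{thm:gaffney_cube};
\item $q$ is bounded in $H^1_0(Q)$ and is handled by Rellich;
\item the identity $\langle M(u_n-u_m),u_n-u_m\rangle=-\langle\div(v_n-v_m),q_n-q_m\rangle+\langle v_n-v_m,z_n-z_m\rangle$ closes the argument.
\end{itemize}
This is shorter and avoids the finite-dimensionality of harmonic fields altogether. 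The price is that you call on Poincar\'e and Rellich for $H^1_0(Q)$ separately. The paper's version, in return, is a reusable statement for arbitrary Hilbert complexes and any $a$ with $\Re a\geq c$.

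Your whole-space Fourier identity for the interior piece is likewise a valid substitute for the paper's covering of the interior by small cubes, which the paper feeds back into the cube argument.

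For a clean write-up, make two changes. First, drop the weighted-decomposition detour: as you noticed yourself, $\div(Mw)=0$ gives no $H^1$-control of $w$, so it leads nowhere. Second, state explicitly that $z=u-\gradcirc q\in H_0(\Rot;Q)$ because $R(\gradcirc)\subseteq N(\Rotcirc)$.
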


The proof will be carried out at the end of this section. It will be deduced eventually from Gaffney's inequality for cubes, which results in the following regularity statement.

\begin{thrm}\label{thrm:gaffcub} Let $Q=(0,1)^d$. Then
   \[
  H_0(\Rot;Q)\cap H(\div;Q) \hookrightarrow H^1(Q)^d.
\]
In particular,
\[
  H_0(\Rot;Q)\cap H(\div;Q) \hookrightarrow L^2(Q)^d
\]
compactly.
\end{thrm}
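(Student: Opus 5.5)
Our plan is to prove Theorem \ref{thrm:gaffcub} with Fourier series on $Q=(0,1)^d$. The boundary condition $\nu\times u=0$ dictates the basis: the tangential components must vanish on the faces, and the normal components need not.

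\emph{Step 1: adapted basis.} For the component $u_i$ we use the mixed sine--cosine system
\[
  \phi^{(i)}_k(x) := c_k \cos(\pi k_i x_i)\prod_{j\neq i}\sin(\pi k_j x_j),\qquad k\in\N_0^d,
\]
with the normalisation $c_k$ chosen so that, for each fixed $i$, the nonzero functions $\phi^{(i)}_k$ form an orthonormal basis of $L^2(Q)$. This follows from completeness of the one-dimensional sine and cosine systems on $(0,1)$ by a tensor-product argument. Smooth fields with these components satisfy the tangential boundary condition, and we aim to show two facts.
\begin{itemize}
\item[(i)] The finite linear combinations $v=\sum_k a_k^{(i)}\phi^{(i)}_k e_i$ are dense in $H_0(\Rot;Q)\cap H(\div;Q)$ with respect to the graph norm.
\item[(ii)] For such $v$ Gaffney's identity holds:
\[
  \|\Grad v\|^2=\|\Rot v\|^2+\|\div v\|^2 .
\]
\end{itemize}
Identity (ii) is a direct computation. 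With $a_k\in\R^d$ the coefficient vector,
\[
  \partial_j v_i \leftrightarrow \pi k_j a^{(i)}_k
\]
in the appropriate product basis. Then $\div v$ has coefficients $\pi\, k\cdot a_k$, and $\Rot v$ has coefficients $\tfrac{\pi}{\sqrt2}\,(k_j a^{(i)}_k - k_i a^{(j)}_k)$. Parseval together with the algebraic identity
\[
  |k|^2|a|^2=(k\cdot a)^2+\tfrac12\sum_{i,j}(k_ja_i-k_ia_j)^2
\]
gives the equality.

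\emph{Step 2: obtain (i) spectrally, avoiding approximation arguments.} Set
\[
  A:=\begin{pmatrix}\Rotcirc\\ \div\end{pmatrix}:\ H_0(\Rot;Q)\cap H(\div;Q)\to L^2(Q)^{d\times d}\times L^2(Q).
\]
Let $u\in D(A)$ and expand $u_i=\sum_k \hat u^{(i)}_k\phi^{(i)}_k$. Testing the distributional definitions against the dual basis functions should give
\[
  \widehat{\div u}=\pi\,k\cdot \hat u_k,\qquad \widehat{\Rot u}=\tfrac{\pi}{\sqrt2}\bigl(k_j\hat u^{(i)}_k-k_i\hat u^{(j)}_k\bigr).
\]
Here we use that $\div=-(\gradcirc)^*$, and for the curl that $\Rotcirc=(-\Div\skw)^*$, by Remark \ref{rmk:adjoints_of_key_operators}. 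For $\div$ the test functions are the products of sines, which lie in $H_0^1$. For $\Rot$ the test functions are skew matrix fields built from mixed products, which lie in $D(\Div\skw)$ with no boundary condition required, because $\Rotcirc$ carries the condition. This is exactly where the boundary conditions enter, and it is the step I expect to be the main obstacle. The difficulty is bookkeeping: checking that each test function lies in the correct domain, and that the pairing $\langle \Rotcirc u,\Psi\rangle=-\langle u,\Div\skw\Psi\rangle$ produces the stated coefficients. Once this is done, Parseval and the algebraic identity from Step 1 give
\[
  \sum_k\pi^2|k|^2|\hat u_k|^2=\|\Rot u\|^2+\|\div u\|^2<\infty .
\]

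\emph{Step 3: $H^1$ regularity.} The finiteness in Step 2 gives $\partial_j u_i\in L^2$, identified via the basis in which $\partial_j$ swaps $\cos\leftrightarrow\sin$ in the $j$-th variable. To confirm that these series really are the weak derivatives, we test against $C_c^\infty$ functions. Hence $u\in H^1(Q)^d$ with
\[
  \|u\|_{H^1}^2=\|u\|^2+\|\Rot u\|^2+\|\div u\|^2 ,
\]
which is the continuous embedding.

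\emph{Step 4: compactness.} Compactness into $L^2(Q)^d$ then follows directly from the coefficient bound, with no need for Rellich--Kondrachov. The truncations $P_N u=\sum_{|k|\le N}\hat u_k\phi_k$ are finite rank and satisfy
\[
  \|u-P_Nu\|^2\le \frac{1}{\pi^2N^2}\bigl(\|\Rot u\|^2+\|\div u\|^2\bigr).
\]
So the embedding is a norm limit of finite-rank operators and is therefore compact.
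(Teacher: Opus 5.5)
Your proof is correct, but it takes a genuinely different route from the paper.

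The paper uses the same trigonometric fields, but only as a dense class. It shows $\mathcal{S}_\mathcal{C}\subseteq D(\Rotcirc)$ by explicit $C_c^\infty$-approximation. It proves that $\mathcal{S}_\mathcal{C}$ is a core for $\Rotcirc$ via a core criterion for $(\Rotcirc)^*\Rotcirc$, and upgrades this to a common core for $\Rotcirc$ and $\div$ with the abstract $V_{\mathcal D}$-lemma and the invariance $V_{\mathcal{S}_\mathcal{C}}\subseteq\mathcal{S}_\mathcal{C}$. It then proves Gaffney's identity on $\mathcal{S}_\mathcal{C}$ by integration by parts, with boundary terms vanishing face by face, and concludes by density and Rellich--Kondrachov.

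You instead use duality directly. Since $\div=-(\gradcirc)^*$ and $\Rotcirc=(-\Div\skw)^*$, you pair with $\prod_j\sin(\pi k_jx_j)\in H^1_0(Q)$ and with the skew fields $\cos(\pi k_ix_i)\cos(\pi k_jx_j)\prod_{l\neq i,j}\sin(\pi k_lx_l)\,(e_ie_j^\top-e_je_i^\top)\in D(\Div\skw)$. This gives the coefficients of $\div u$ and $\Rotcirc u$ for every $u$ in the domain, and Lagrange's identity then gives Gaffney's equality at once. So no core or density result is needed; your (i) and (ii) become consequences rather than ingredients.

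The bookkeeping you worry about does close. The $L^2$-norm of $\phi^{(i)}_k$ agrees with that of the function obtained by swapping $\sin\leftrightarrow\cos$ in its $x_j$-factor whenever $k_j\neq 0$. That is exactly when the term $k_j\hat u^{(i)}_k$ can be nonzero, so the normalized coefficients are as you state. There is one harmless sign slip: the divergence coefficients are $-\pi\,k\cdot\hat u_k$. The same norm observation justifies the identification of weak derivatives in your Step 3.

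Your route buys brevity and transparency. The boundary condition enters only through which test fields are admissible in the duality, and compactness follows by explicit finite-rank truncation without Rellich--Kondrachov. The paper's route buys modularity. It separates reusable Hilbert-complex tools (the core criterion and the Pauly--Waurick common-core lemma) from the concrete trigonometric input, and it keeps the classical pattern of proving Gaffney's identity for smooth fields and extending by density.
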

\begin{proof}
The proof will be carried out in the subsequent sections, see Theorem \ref{thm:gaffney_cube}. Note that the last statement follows from the first invoking the classical Rellich--Kondrachov selection theorem.
\end{proof}

In order to achieve a proof of Theorem \ref{thrm:mtsec2}, we require a compatibility formula for bi-Lipschitz transformations of global Lipschitz domains with the operators $\div$ and $\Rot$. We recall the following definition from \cite{PaulyWaurick2026}:

\begin{dfntn}\label{defn:global_lipschitz}
    A set $\Omega \subseteq \R^d$ is a \textbf{global (strong) Lipschitz domain} if there exist an open cube $Q = (0,1)^d$ and a Lipschitz function $\Phi: \R^d \rightarrow \R^d $ that maps $Q$ bijectively to $\Omega$, and its restriction to $Q$, still denoted by
    \begin{align*}
        \Phi : Q \rightarrow \Omega,
    \end{align*}
    is bi-Lipschitz, bounded, and regular. We refer to $\Phi:Q\rightarrow \Omega$ as an \textbf{admissible} map for $\Omega$.
\end{dfntn}

The compatibility theorem now reads as follows. For this note that for the considered differential operators, we shall write for instance $\Rotcirc_\Omega$ and $\Rotcirc_Q$, to clarify that it acts on vector fields defined on $\Omega$ and $Q$ respectively. When this is clear from the context, we shall keep the notation compact and simply write for instance $\Rotcirc$. This applies verbatim to $\div$.

\begin{prop}\label{prop:rot_div_comp} Let $\Omega\subseteq \R^d$ be a global Lipschitz domain and $\Phi\colon Q\to \Omega$ be an admissible transformation. Writing $J_\Phi := \Grad(\Phi)$ and $\tilde{u} := u \circ \Phi$, the following assertions hold.

    (a)
    Let $  u \in H_0(\Rot;\Omega)$. Then 
    $J_\Phi^\top \widetilde{  u} \in H_0(\Rot;Q)$ and
    \begin{equation*}
        \Rotcirc_Q{(J_\Phi^\top \widetilde{  u})} 
        =  J_\Phi^\top\,\widetilde{\Rotcirc_\Omega   u}\,J_\Phi =: ( J_\Phi^\top \otimes J_\Phi^\top )\widetilde{\Rotcirc_\Omega   u}.
    \end{equation*}

    (b)  Let $  u \in H(\div;\Omega)$. Then $(\adjugate J_\Phi) \widetilde{  u} \in H(\div;Q)$ and
    \begin{align*}
        \div_Q{( (\adjugate J_\Phi) \widetilde{  u})} 
        = (\det{J_\Phi})\widetilde{\div_\Omega   u}.
    \end{align*}
\end{prop}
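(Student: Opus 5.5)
The plan is a density argument: verify both identities for smooth, compactly supported fields first, then pass to limits, using closedness of $\Rotcirc_Q$ for (a) and duality against test functions for (b). The Lipschitz regularity of $\Phi$ enters through two facts. First, $J_\Phi\in L^\infty$ with $\det J_\Phi\ge c>0$ a.e. Second, composition with a bi-Lipschitz regular map is an isomorphism $L^2(\Omega)\to L^2(Q)$, by the change of variables formula for Lipschitz maps.

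\emph{Part (a).} Let $u\in C_c^\infty(\Omega)^d$. Then $\tilde u=u\circ\Phi$ is Lipschitz with compact support in $Q$, so $v:=J_\Phi^\top\tilde u$ is only $L^\infty$ with compact support; its entries are not smooth. I would therefore mollify $\Phi$ componentwise to get $\Phi_\varepsilon$. On a neighbourhood of $\operatorname{supp}\tilde u$, the fields $v_\varepsilon:=J_{\Phi_\varepsilon}^\top(u\circ\Phi_\varepsilon)$ are smooth and compactly supported in $Q$ for small $\varepsilon$. For them the classical pullback identity holds:
\[
\partial_j(v_\varepsilon)_i-\partial_i(v_\varepsilon)_j=\sum_{k,l}\partial_i\Phi_{\varepsilon,k}\,\partial_j\Phi_{\varepsilon,l}\,\bigl((\partial_l u_k-\partial_k u_l)\circ\Phi_\varepsilon\bigr).
\]
This holds because the second derivatives $\partial_i\partial_j\Phi_{\varepsilon,k}$ appear symmetrically and cancel after skew-symmetrization. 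In matrix form, $\Rot_c v_\varepsilon=J_{\Phi_\varepsilon}^\top\,(\Rot_c u\circ\Phi_\varepsilon)\,J_{\Phi_\varepsilon}$, up to the transpose convention of $\skw$. As $\varepsilon\to0$ we have $J_{\Phi_\varepsilon}\to J_\Phi$ a.e. with a uniform $L^\infty$ bound, and $u\circ\Phi_\varepsilon\to u\circ\Phi$ uniformly. Dominated convergence then gives $v_\varepsilon\to v$ and $\Rot_c v_\varepsilon\to J_\Phi^\top\widetilde{\Rot_c u}J_\Phi$ in $L^2$. Since $\Rotcirc_Q$ is closed, we get $v\in H_0(\Rot;Q)$ together with the formula.

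For general $u\in H_0(\Rot;\Omega)$, I would take $u_n\in C_c^\infty(\Omega)^d$ with $u_n\to u$ in the graph norm. The map $w\mapsto J_\Phi^\top\tilde w$ and its matrix analogue are bounded on $L^2$ by the isomorphism property above. Closedness of $\Rotcirc_Q$ then finishes (a).

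\emph{Part (b).} Here I would argue by duality, since $\div_Q=-(\gradcirc_Q)^\ast$. It suffices to show, for every $\psi\in C_c^\infty(Q)$,
\[
\int_Q (\adjugate J_\Phi)\tilde u\cdot\nabla\psi=-\int_Q(\det J_\Phi)\,\widetilde{\div u}\,\psi .
\]
Set $\varphi:=\psi\circ\Phi^{-1}$. It is Lipschitz with compact support in $\Omega$, hence lies in $H^1_0(\Omega)=D(\gradcirc_\Omega)$; this follows by mollification. By the chain rule for Lipschitz maps, $\nabla\psi=J_\Phi^\top(\nabla\varphi\circ\Phi)$ a.e. Combined with $(\adjugate J_\Phi)^\top J_\Phi^\top=(\det J_\Phi) I$, this gives $(\adjugate J_\Phi)\tilde u\cdot\nabla\psi=\det J_\Phi\,(\tilde u\cdot\widetilde{\nabla\varphi})$. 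Changing variables via $\Phi$ then turns the left side into $\int_\Omega u\cdot\nabla\varphi$. By the definition of $\div_\Omega$, this equals $-\int_\Omega \div u\,\varphi$, and changing variables back gives the right side. The right side is in $L^2$ because $\det J_\Phi$ is bounded, so (b) follows.

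\emph{Main obstacle.} The delicate step is the chain rule and change of variables for merely Lipschitz, bi-Lipschitz maps. This includes the a.e. identity $\nabla(\varphi\circ\Phi)=J_\Phi^\top(\nabla\varphi\circ\Phi)$ for Lipschitz $\varphi$, and the fact that the image of $\Phi$ under composition preserves null sets. Both rest on Rademacher's theorem and the area formula, which I would quote. A second point needing care is that the mollified $\Phi_\varepsilon$ in (a) is only used near the compact set $\operatorname{supp}\tilde u$, where it stays inside $Q$ after composition. This requires extending $u$ by zero to $\R^d$ and noting that $\Phi_\varepsilon\to\Phi$ uniformly on compacts, using the Lipschitz extension of $\Phi$ to $\R^d$ from Definition \ref{defn:global_lipschitz}.
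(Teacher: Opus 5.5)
Your proposal is correct. Part (b) is essentially the paper's argument: duality against $\gradcirc_Q$, change of variables, the a.e.\ chain rule and $(\adjugate J_\Phi)^\top J_\Phi^\top=(\det J_\Phi)I$. You test with $\psi\in C_c^\infty(Q)$ and push forward to $\psi\circ\Phi^{-1}$, while the paper tests with $v\in C_c^{0,1}(\Omega)$ and pulls back via Proposition~\ref{prop:grad_composition}; this amounts to the same thing. In part (a) your density step coincides with the paper's Step~2, but your first step takes a genuinely different route. The paper treats $v\in C_c^{0,1}(\Omega)^d$ by a duality computation against matrix fields $A$. It combines the chain rule $\Gradcirc_Q\widetilde v=\widetilde{\Gradcirc_\Omega v}\,J_\Phi$ with the identity $\int_Q\Gradcirc\widetilde v:(J_\Phi B)=-\int_Q\widetilde v\cdot J_\Phi\Div B$ for skew-symmetric $B$. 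That identity is the distributional fact that the second derivatives of the Lipschitz map $\Phi$ drop out against skew fields, and the paper uses it without further justification. You instead mollify $\Phi$, carry out that cancellation pointwise for smooth maps, and pass to the limit by dominated convergence and closedness of $\Rotcirc_Q$. This in effect supplies the missing justification. Moreover, since $v_\varepsilon|_Q\in C_c^\infty(Q)^d$, you get membership in $H_0(\Rot;Q)$ directly. A duality test against smooth $A$ by itself only identifies the maximal $\Rot_Q$, and the passage to $\Rotcirc_Q$ rests on the compact support of $J_\Phi^\top\widetilde v$. The price is a support argument, which you should state explicitly. Take a compact $K'\subset Q$ with $\operatorname{supp}\widetilde u\subset\operatorname{int}K'$. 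Then $\Phi(\overline Q\setminus\operatorname{int}K')$ is compact and disjoint from $\operatorname{supp}u$, because $\Phi$ maps $\partial Q$ onto $\partial\Omega$. Uniform convergence $\Phi_\varepsilon\to\Phi$ therefore forces $v_\varepsilon=0$ on $Q\setminus K'$ for small $\varepsilon$. What the extension of $\Phi$ does outside $\overline Q$ is irrelevant, since only $v_\varepsilon|_Q$ enters. The paper's route is shorter and keeps (a) and (b) uniformly as duality computations.
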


Proposition \ref{prop:rot_div_comp} shall be proven in Section \ref{sec:compatibility_proof} as Proposition \ref{prop:rot_composition} for (a) and Proposition \ref{prop:div_composition} for (b). Before we get to the proof of Theorem \ref{thrm:mtsec2}, we will need a little observation, which is a special case of Helga's theorem, see \cite[Theorem 9.1]{Waurick2025}. Here, we write $L(H)$ for the collection of bounded linear operators on a Hilbert space $H$.

\begin{thrm}\label{thrm:abstractcompact} 
    Let $H_0,H_1,H_2$ be Hilbert spaces, $A_0\colon D(A_0)\subseteq H_0\to H_1$ and $A_1\colon D(A_1)\subseteq H_1\to H_2$ be densely defined closed linear operators satisfying $R(A_0)\subseteq N(A_1)$. Moreover, let $a \in L(H_1)$ satisfy $\Re a\coloneqq \frac{1}{2}(a+a^*)\geq c$ for some $c>0$ in the sense of positive definiteness. 

    If $D(A_1)\cap D(A_0^*)\hookrightarrow H_1$ is compact, then $D(A_1)\cap D(A_0^* a)\hookrightarrow H_1$ is compact.
\end{thrm}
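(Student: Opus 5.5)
Take a bounded sequence $(u_n)$ in $D(A_1)\cap D(A_0^*a)$, i.e.\ $\|u_n\|_{H_1}+\|A_1u_n\|_{H_2}+\|A_0^*au_n\|_{H_0}\le C$, and extract a subsequence converging in $H_1$. The idea is to split $u_n$ into a part living in $\overline{R(A_0)}$ and a part in $D(A_1)\cap D(A_0^*)$, to which the hypothesis applies. Since $a$ is bounded with $\Re a\ge c$, $a$ is invertible with bounded inverse: by Lax--Milgram, $\Re\langle ax,x\rangle\ge c\|x\|^2$ makes $a$ injective with closed range, and the same holds for $a^*$, so $a$ is bijective.

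\emph{Decomposition.} Write $H_1=\overline{R(A_0)}\oplus N(A_0^*)$. The form $(x,y)\mapsto\langle ax,y\rangle$ is bounded and coercive on the closed subspace $\overline{R(A_0)}$. By Lax--Milgram there is a unique $w_n\in\overline{R(A_0)}$ with $\langle a w_n,y\rangle=\langle a u_n,y\rangle$ for all $y\in\overline{R(A_0)}$, and $\|w_n\|\le (\|a\|/c)\|u_n\|$. Then $v_n:=u_n-w_n$ satisfies $av_n\perp \overline{R(A_0)}$, i.e.\ $av_n\in N(A_0^*)$. Consequently $au_n=aw_n+av_n$ with $av_n\in N(A_0^*)\subseteq D(A_0^*)$, so $aw_n\in D(A_0^*)$ and $A_0^*aw_n=A_0^*au_n$ is bounded.

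\emph{Compactness of the $w_n$ part.} The plan is to use the hypothesis in the dual form that $A_0^*$ restricted to $\overline{R(A_0)}\cap D(A_0^*)$ has compact inverse. Indeed, $\overline{R(A_0)}\cap D(A_0^*)\subseteq N(A_1)\cap D(A_0^*)\subseteq D(A_1)\cap D(A_0^*)$, since $R(A_0)\subseteq N(A_1)$ and $N(A_1)$ is closed. Hence the embedding $\overline{R(A_0)}\cap D(A_0^*)\hookrightarrow H_1$ is compact. This yields a Poincaré-type estimate $\|x\|\le C_P\|A_0^*x\|$ on that space by the standard contradiction argument, using injectivity of $A_0^*$ on $\overline{R(A_0)}$. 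It also gives that $R(A_0^*)$ is closed and that $(A_0^*|_{\overline{R(A_0)}})^{-1}\colon R(A_0^*)\to H_1$ is compact.

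Now decompose $aw_n=p_n+q_n$ with $p_n\in\overline{R(A_0)}\cap D(A_0^*)$ and $q_n\in N(A_0^*)$. Then $A_0^*p_n=A_0^*au_n$ is bounded, so $(p_n)$ has a convergent subsequence. For the remaining piece use $w_n\in\overline{R(A_0)}$: for $y\in\overline{R(A_0)}$ we have
\[
\langle a w_n, y\rangle=\langle p_n,y\rangle .
\]
This says that $w_n$ is the Lax--Milgram solution with right-hand side $p_n$ restricted to $\overline{R(A_0)}$. The solution map is continuous, so $(w_n)$ converges along the subsequence. This is the step where care is needed, because $q_n$ carries no compactness; the trick is that $q_n$ is invisible when testing against $\overline{R(A_0)}$.

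\emph{The $v_n$ part.} We have $v_n\in D(A_1)$, since $w_n\in\overline{R(A_0)}\subseteq N(A_1)$, and $A_1v_n=A_1u_n$ is bounded. However, $v_n$ lies in $a^{-1}N(A_0^*)$, not in $N(A_0^*)$. To handle this, project: $v_n=r_n+s_n$ with $r_n\in\overline{R(A_0)}$ and $s_n\in N(A_0^*)$. Then $s_n\in D(A_1)\cap D(A_0^*)$, because $r_n\in N(A_1)$, with $A_1s_n=A_1u_n$ and $A_0^*s_n=0$. By hypothesis $(s_n)$ has a convergent subsequence. Moreover $av_n\in N(A_0^*)$ gives, for $y\in\overline{R(A_0)}$,
\[
\langle a r_n,y\rangle=-\langle a s_n,y\rangle ,
\]
so $r_n$ is again the Lax--Milgram image of $-as_n$ and converges as well. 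Thus $u_n=w_n+r_n+s_n$ converges in $H_1$ along a common subsequence, which proves compactness. The main obstacle is the identification of the $\overline{R(A_0)}$-components via the coercive form; everything else is bookkeeping with the orthogonal decomposition $H_1=\overline{R(A_0)}\oplus N(A_0^*)$.
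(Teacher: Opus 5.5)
Your proof is correct, but it takes a different route from the paper's.

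The paper starts from a weak null sequence $(E_n)$ and uses the three-part decomposition $H_1=\overline{R(A_0)}\oplus\bigl(N(A_1)\cap N(A_0^*)\bigr)\oplus\overline{R(A_1^*)}$. It first proves that the harmonic part $N(A_1)\cap N(A_0^*)$ is finite-dimensional. From the hypothesis it then gets $\pi_1E_n\to0$ and $\pi_0aE_n\to0$ strongly. It concludes by expanding $c\|E_n\|^2\le\Re\langle aE_n,E_n\rangle$ along the decomposition, so that each term pairs a strongly null sequence with a weakly null one.

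You use only the two-part splitting $\overline{R(A_0)}\oplus N(A_0^*)$:
\begin{itemize}
\item Your $s_n$ is exactly $(I-\pi_0)u_n$. It is precompact directly by the hypothesis applied on $D(A_1)\cap N(A_0^*)$, which already contains the harmonic fields, so no finite-dimensionality step is needed.
\item Your $p_n=\pi_0 a w_n$ equals $\pi_0 a u_n$, because $av_n\in N(A_0^*)$. It is precompact by the hypothesis applied on $\overline{R(A_0)}\cap D(A_0^*)$.
\item Your $w_n+r_n$ is $\pi_0u_n$. It is recovered continuously from these two pieces through the inverse of the coercive compression $T:=\pi_0a|_{\overline{R(A_0)}}$, namely $\pi_0u_n=T^{-1}\pi_0\bigl(au_n-a(I-\pi_0)u_n\bigr)$.
\end{itemize}

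Your route is constructive and avoids both the harmonic-field argument and the weak--strong pairing lemma. The paper's version is shorter to write and gives finite-dimensionality of $N(A_1)\cap N(A_0^*)$ as a by-product.

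Two simplifications are available. The Poincar\'e estimate and the closed-range remarks are unnecessary, since $\|p_n\|\le\|a\|\,\|w_n\|$ already bounds $(p_n)$ in $H_1$. You could also merge the $w_n$ and $r_n$ steps into the single identity for $\pi_0u_n$ above.
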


\begin{proof}
    Let $(E_n)_n$ be a weak nullsequence in $D(A_1)\cap D(A_0^* a)$. That is, $E_n\rightharpoonup 0$ in $H_1$, $A_1 E_n\rightharpoonup 0$ in $H_2$, and $A_0^\ast aE_n \rightharpoonup 0$ in $H_0$. It suffices to show that $E_n\to 0$ strongly in $H_1$. For this, we note that the assumption implies that $D(A_1)\cap N(A_1)^\bot\hookrightarrow H_1$ and $D(A_0^*)\cap N(A_0^*)^\bot\hookrightarrow H_1$ compactly, where we view for instance, $N(A_1)^\perp$ as a subspace of $(D(A_0^\ast), \|\cdot \|_{A_0^\ast})$.
    
    Let $\pi_1\colon H_1\to H_1$ be the orthogonal projection onto $N(A_1)^{\bot}$. Then $A_1 \pi_1 E_n = A_1 E_n \stackrel{H_2}{\rightharpoonup} 0$, $\pi_1 E_n \stackrel{H_1}{\rightharpoonup} 0$, and $A_0^\ast \pi_1 E_n \equiv 0$, and thus we infer by compact embedding that $\pi_1 E_n \to 0$  strongly in $H_1$. Similarly, if $\pi_0$ denotes the orthogonal projection on $N(A_0^*)^\bot$, we deduce that $\pi_0 a E_n \to 0$ strongly in $H_1$.
    Next, consider the orthogonal decomposition $H_1 = \overline{R(A_0)} \oplus \left( N(A_1) \cap N(A_0^\ast) \right) \oplus \overline{R(A_1^\ast)}$, and observe that $\pi_2 := I - \pi_1 - \pi_0$ is the orthogonal projection onto $N(A_1) \cap N(A_0^\ast)$. We claim that $N(A_1) \cap N(A_0^\ast)$ is finite-dimensional. Indeed, consider the composition of embeddings between Banach spaces
    \begin{align*}
        (N(A_1) \cap N(A_0^\ast), \| \cdot \|_{H_1}) \stackrel{i}{\hookrightarrow}
        (D(A_1) \cap D(A_0^\ast), \| \cdot \|_{D(A_1) \cap D(A_0^\ast)}) \stackrel{j}{\hookrightarrow}
        (H_1, \| \cdot \|_{H_1}).
    \end{align*}
    Observe that $\| x \|_{H_1} = \| x \|_{D(A_1) \cap D(A_0^\ast)}$ when $x \in N(A_1) \cap N(A_0^\ast)$. Thus $i$ is continuous, and so $ji$ is compact. Further, $ji$ restricts to the identity map on $(N(A_1) \cap N(A_0^\ast), \| \cdot \|_{H_1})$. This proves the claim.

    Thus $\pi_2$ is finite-rank, and in particular compact. We now obtain
    \begin{align*}
      c\langle E_n,E_n\rangle_{H_1} 
      & \leq \Re \langle aE_n,E_n \rangle_{H_1} \\
      & =\Re \langle \pi_0(aE_n), \pi_0 E_n \rangle_{H_1}
      +\Re \langle \pi_1(aE_n), \pi_1 E_n \rangle_{H_1}
      +\Re \langle \pi_2(aE_n), \pi_2 E_n \rangle_{H_1} \\
      &\rightarrow 0,
    \end{align*}
    where we used that the scalar product of a weakly convergent sequence with a strongly convergent one converges to the scalar product of the respective limits of the two sequences (see e.g.~\cite[Proposition~1.19]{cioranescu_donato}).
    The above computation shows $\|E_n\|_{H_1}^2\to 0$, which shows the assertion.
\end{proof}

Finally, we may now prove the main theorem of the present section.

\begin{proof}
    [Proof of Theorem \ref{thrm:mtsec2}]
\textbf{Step 1.} $\Omega$ is a global Lipschitz domain. 

 Let $(u_n)_n$ be bounded in $ H_0(\Rot;\Omega)\cap H(\div;\Omega)$. For any $n\in \N$ define $\widetilde{u}_n :=  u\circ \Phi$, where $\Phi\colon Q\to \Omega$ is bi-Lipschitz. 
  
  Then, by Proposition \ref{prop:rot_div_comp}, we have that
  \begin{align*}
      \div_Q ((\adjugate J_\Phi) \widetilde{u}_n) &= (\det J_\Phi)\widetilde{
      \div_\Omega u_n},\\  \Rotcirc_Q( J_\Phi^\top \widetilde{ u}_n) &= (J_\Phi^\top \otimes J_\Phi^\top)\widetilde{\Rotcirc_\Omega u_n}
  \end{align*} 
  for any $n\in \N$. In particular, for $\widetilde{v}_n \coloneqq  J_\Phi^\top \widetilde{ u}_n$ we infer for $a\coloneqq (\det J_\Phi )J_\Phi^{-1}J_\Phi^{-\top}$ (which is strictly positive definite, as $\Phi$ is a bi-Lipschitz homeomorphism) that
    \begin{align*}
    \div_Q a \widetilde{v}_n &=    \div_Q((\adjugate J_\Phi) \widetilde{u}_n)  = (\det J_\Phi)\widetilde{
      \div_\Omega u_n}, \\ \Rotcirc_Q \widetilde{v}_n &= (J_\Phi^\top \otimes J_\Phi^\top)\widetilde{\Rotcirc_\Omega u_n}.
  \end{align*}
  By assumption, it follows that $(\widetilde{v}_n)_n$ is bounded in $D(\Rotcirc_Q)\cap D(\div_Q a)$. By Theorem \ref{thrm:gaffcub}, we have that $D(\Rotcirc_Q) \cap D(\div_Q)\hookrightarrow L^2(Q)^d$ compactly. Hence, by Theorem \ref{thrm:abstractcompact}, applied to $A_0=\gradcirc$ and $A_1=\Rotcirc$, we have that $D(\Rotcirc_Q) \cap D(\div_Q a)\hookrightarrow L^2(Q)^d$ compactly. As a consequence, $(\widetilde{v}_n)_n$ has an $L^2(Q)^d$-convergent subsequence and, thus, so does $(u_n)_n$ proving the assertion for the global Lipschitz case.

  \textbf{Step 2.} $\Omega$ is a weak Lipschitz domain.
  
  Let $\mathcal{U}$ be an open cover consisting of the open sets $U_1,\ldots , U_K$ covering $\partial\Omega$ and for a fixed $\varepsilon>0$ small enough, the family $x+\varepsilon (0,1)^d \in \mathcal{U}$, $x\in \Omega$, with $\operatorname{dist}(\partial\Omega,x+\varepsilon (-1/2,1/2)^d)>0$. Then $\mathcal{U}$ is an open cover of $\overline{\Omega}$ and we find a finite subcover $\mathcal{F}\subseteq \mathcal{U}$. Next, choose $(\phi_F)_{F\in \mathcal{F}}$, a $C^\infty$-partition of unity subordinate to $\mathcal{F}$.
  Next, note that for each $F \in \mathcal{F}$, by the product rule, the mapping
  \[
  H_0(\Rot;\Omega)\cap H(\div;\Omega)\to H_0(\Rot;F) \cap H(\div;F),\qquad u\mapsto \phi_F u 
  \] is continuous
  and $F$ is a global Lipschitz domain.
  Thus, any bounded sequence $(u_n)_n$ in $H_0(\Rot;\Omega) \cap H(\div;\Omega)$ admits a subsequence such that $(\phi_F u_n )_n$ is $L^2(\Omega)^d$-convergent by Step 1.
  As $(\phi_F)_F$ is a partition of unity of $\Omega$, it follows that the chosen subsequence is $L^2(\Omega)^d$ convergent and the claim follows.
\end{proof}

The statements left to prove are now Theorem \ref{thrm:gaffcub} and Proposition \ref{prop:rot_div_comp}. We shall start with the former for which we need some prerequisites in the functional analysis of Hilbert complexes, see also \cite{BPS16,PaulyWaurick2026}
 and the references therein.
 
\section{Abstract functional analytic framework}\label{sect:abstract_framework}

\noindent
In this section we shall assume the following setup:
\begin{itemize}
    \item $\hilbert H_0$, $\hilbert H_1$, and $\hilbert H_2$ are Hilbert spaces, which may be real or complex.
    \item $\op A_0: D(\op A_0) \subseteq \hilbert H_0 \rightarrow \hilbert H_1$ and $\op A_1: D(\op A_1) \subseteq \hilbert H_1 \rightarrow \hilbert H_2$ are densely defined, closed operators, such that
    \begin{align}\label{eqn:range_ker_condition}
        \overline{R(\op A_0)} \subseteq N(\op A_1).
    \end{align}
\end{itemize}

We make a few preliminary remarks. First, recall that $\op A_0$ and $\op A_1$ induce the orthogonal decompositions
\begin{align*}
    \hilbert H_1 
    = \overline{R(\op A_0)} \oplus N(\op A_0^\ast), \qquad 
    \hilbert H_1
    = N(\op A_1) \oplus \overline{R(\op A_1^\ast)},
\end{align*}
which, in view of \eqref{eqn:range_ker_condition}, implies that
\begin{align*}
    \overline{R(\op A_1^\ast)} \subseteq N(\op A_0^\ast).
\end{align*}

Second, for application of the abstract results of the present section, the reader may keep in mind the following example:

\begin{eg}\label{eg:key_operators}
    Let $\Omega\subseteq \R^d$ be open. Consider $\hilbert H_0 := L^2(\Omega)$, $\hilbert H_1 := L^2(\Omega)^d$, $\hilbert H_2 := L^2(\Omega)^{d\times d}$, and
    \begin{alignat*}{3}
       \op A_0 := \gradcirc,\qquad        \op A_1 := \Rotcirc.
    \end{alignat*}
    Then, $R(\op A_0) \subseteq N(\op A_1)$ is a consequence of the fact that $\Rotcirc(\gradcirc v) = \skw(D^2 v) = 0$ for all $v \in C_c^\infty(\Omega)$, where $D^2 v$ denotes the Hessian of $v$. Since $N(\op A_1)$ is closed in $L^2(\Omega)^d$, we obtain \eqref{eqn:range_ker_condition}.
\end{eg}

The main objective of this section is to identify two key sets, $\mathcal{D}$ and $V_\mathcal{D}$, and study their relation to the operators $\op A_0$ and $\op A_1$. We begin with the definition of these sets: 

\begin{dfntn}\label{defn:vd}
    For $\mathcal{D} \subseteq D(\op A_0^\ast)$, we define the collection
    \begin{align}\label{eqn:vd}
        V_{\mathcal{D}} = \left\{ \psi \in \hilbert H_1 \,:\, \psi = \left(\op I - \op A_0 (\op A_0^\ast \op A_0 + \op I)^{-1} \op A_0^\ast \right) \phi\text{ for some } \phi \in \mathcal{D} \right\}.
    \end{align}
\end{dfntn}

\begin{rmrk}
    Note that
    \begin{align}\label{eqn:vd_alt_helper}
        \op I - \op A_0 (\op A_0^\ast \op A_0 + \op I)^{-1} \op A_0^\ast \subseteq (\op A_0 \op A_0^\ast + \op I)^{-1},
    \end{align}
    which offers an alternative way of writing $V_\mathcal{D}$, namely,
    \begin{align*}
        V_\mathcal{D} =  \left\{ \psi \in \hilbert H_1 \,:\, \psi = (\op A_0 \op A_0^\ast + \op I)^{-1} \phi\text{ for some } \phi \in \mathcal{D} \right\} .
    \end{align*}
    To see \eqref{eqn:vd_alt_helper}, note that $\op A_0 (\op A_0^\ast \op A_0 + \op I)^{-1} \op A_0^\ast (\op A_0 \op A_0^\ast + \op I)
    = \op A_0 (\op A_0^\ast \op A_0 + \op I)^{-1} (\op A_0^\ast \op A_0 + \op I) \op A_0^\ast
    = \op A_0 \op A_0^\ast$, which gives $\op A_0 (\op A_0^\ast \op A_0 + \op I)^{-1} \op A_0^\ast \subseteq \op A_0 \op A_0^\ast (\op A_0^\ast \op A_0 + \op I)^{-1}$.
\end{rmrk}

The next result quotes a density lemma in its abstract form provided in \cite[Lemma 4.1]{PaulyWaurick2026}. For convenience of the reader, we provide the easy proof. 

\begin{prop}\label{prop:vd_common_core}
    If $\mathcal{D} \subseteq D(\op A_1) \cap D(\op A_0^\ast)$ and $\mathcal{D}$ is a core for $\op A_1$, then $V_\mathcal{D}$ is a common core of $\op A_1$ and $\op A_0^\ast$. That is, $V_\mathcal{D}$ is dense in $\left( D(\op A_1) \cap D(\op A_0^\ast), \langle \cdot, \cdot \rangle_{\hilbert X} \right)$ where
    \begin{align}\label{eqn:graph_inner_product}
        \langle u, v \rangle_{\hilbert X} := \langle u, v \rangle_{\hilbert H_1} 
        + \langle \op A_1 u, \op A_1 v \rangle_{\hilbert H_2} 
        + \langle \op A_0^\ast u, \op A_0^\ast v \rangle_{\hilbert H_0}
    \end{align}
    for all $u,v \in D(\op A_1) \cap D(\op A_0^\ast)$.
\end{prop}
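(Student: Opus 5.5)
The plan is to first check that $V_\mathcal{D}\subseteq D(\op A_1)\cap D(\op A_0^\ast)$, and then prove density by showing that the orthogonal complement of $V_\mathcal{D}$ in the Hilbert space $\hilbert X:=(D(\op A_1)\cap D(\op A_0^\ast),\langle\cdot,\cdot\rangle_{\hilbert X})$ is trivial. Completeness of $\hilbert X$ follows from closedness of $\op A_1$ and $\op A_0^\ast$.

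\textbf{Step 1: inclusion.} Let $\phi\in\mathcal{D}$ and $\psi=(\op I-\op A_0(\op A_0^\ast\op A_0+\op I)^{-1}\op A_0^\ast)\phi$. Set $w:=(\op A_0^\ast\op A_0+\op I)^{-1}\op A_0^\ast\phi\in D(\op A_0^\ast\op A_0)\subseteq D(\op A_0)$. Then $\op A_0w\in R(\op A_0)\subseteq N(\op A_1)$ by \eqref{eqn:range_ker_condition}. Since $\phi\in D(\op A_1)$, we get $\psi=\phi-\op A_0 w\in D(\op A_1)$ with $\op A_1\psi=\op A_1\phi$. Moreover, by \eqref{eqn:vd_alt_helper}, $\psi=(\op A_0\op A_0^\ast+\op I)^{-1}\phi\in D(\op A_0\op A_0^\ast)\subseteq D(\op A_0^\ast)$. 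Here I use that $\op A_0\op A_0^\ast+\op I$ is boundedly invertible, since $\op A_0^\ast$ is closed and densely defined (von Neumann). Hence $V_\mathcal{D}\subseteq D(\op A_1)\cap D(\op A_0^\ast)$.

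\textbf{Step 2: density.} Let $u\in D(\op A_1)\cap D(\op A_0^\ast)$ with $\langle u,\psi\rangle_{\hilbert X}=0$ for all $\psi\in V_\mathcal{D}$. Write $\psi=(\op A_0\op A_0^\ast+\op I)^{-1}\phi$ with $\phi\in\mathcal{D}$. Since $\op A_0^\ast\psi\in D(\op A_0)$, the definition of the adjoint (using $\op A_0^{\ast\ast}=\op A_0$) gives
\[
\langle \op A_0^\ast u,\op A_0^\ast\psi\rangle_{\hilbert H_0}=\langle u,\op A_0\op A_0^\ast\psi\rangle_{\hilbert H_1},
\]
and hence
\[
\langle u,\psi\rangle_{\hilbert H_1}+\langle \op A_0^\ast u,\op A_0^\ast\psi\rangle_{\hilbert H_0}=\langle u,(\op A_0\op A_0^\ast+\op I)\psi\rangle_{\hilbert H_1}=\langle u,\phi\rangle_{\hilbert H_1}.
\]
Combined with $\op A_1\psi=\op A_1\phi$ from Step 1, the orthogonality condition becomes
\[
\langle u,\phi\rangle_{\hilbert H_1}+\langle \op A_1u,\op A_1\phi\rangle_{\hilbert H_2}=0\quad\text{for all }\phi\in\mathcal{D}.
\]
Because $\mathcal{D}$ is a core for $\op A_1$, this extends by continuity in the $\op A_1$-graph norm to all $\phi\in D(\op A_1)$. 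Choosing $\phi=u$ yields $\|u\|_{\hilbert H_1}^2+\|\op A_1u\|_{\hilbert H_2}^2=0$, so $u=0$ and $V_\mathcal{D}$ is dense in $\hilbert X$.

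The only delicate point is the adjoint identity in Step 2. It requires $\psi\in D(\op A_0\op A_0^\ast)$, not merely $\psi\in D(\op A_0^\ast)$, which is exactly why the resolvent representation \eqref{eqn:vd_alt_helper} of $V_\mathcal{D}$ is used. Everything else is routine, and the argument works verbatim for real or complex Hilbert spaces.
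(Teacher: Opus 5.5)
Your proof is correct and follows the paper's route. You show that the $\langle\cdot,\cdot\rangle_{X}$-orthogonal complement of $V_\mathcal{D}$ is trivial by reducing $\langle u,\psi\rangle_{X}$ to the $A_1$-graph inner product $\langle u,\phi\rangle_{H_1}+\langle A_1u,A_1\phi\rangle_{H_2}$ and then using the core property. The only differences are cosmetic: you use the resolvent form $\psi=(A_0A_0^\ast+I)^{-1}\phi$ to collapse the $H_1$- and $A_0^\ast$-terms in one step, and you spell out the inclusion $V_\mathcal{D}\subseteq D(A_1)\cap D(A_0^\ast)$, whereas the paper expands $\phi-A_0(A_0^\ast A_0+I)^{-1}A_0^\ast\phi$ and cancels the terms by hand.
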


\begin{proof}
    Since $\op A_1$ and $\op A_0^\ast$ are closed, $\left( D(\op A_1) \cap D(\op A_0^\ast), \langle \cdot, \cdot \rangle_{\hilbert X} \right)$ is a Hilbert space. Furthermore, $V_\mathcal{D}$ is a subspace of this Hilbert space, which follows by combining the assumption $\mathcal{D} \subseteq D(\op A_1) \cap D(\op A_0^\ast)$ with the definition of $V_{\mathcal{D}}$ (see \eqref{eqn:vd}).
    
    Hence, it suffices to show that $(V_\mathcal{D})^{\perp_{\hilbert X}}$ is trivial, where 
    \begin{align*}
        (V_\mathcal{D})^{\perp_{\hilbert X}} = \{ u \in D(\op A_1) \cap D(\op A_0^\ast) \,:\, \langle u, v \rangle_{\op X} = 0 ~~\text{for all $v \in V_\mathcal{D}$} \}.
    \end{align*}

    Fix $u \in (V_\mathcal{D})^{\perp_{\hilbert X}}$. We shall show that $u = 0$. To this end, let $\phi \in \mathcal{D}$. Then,
    \begin{alignat}{3}
        0 &= \langle u, \phi - \op A_0 (\op A_0^\ast \op A_0 + \op I)^{-1} \op A_0^\ast \phi \rangle_{\hilbert X}
        \qquad
        &&\text{(since $u \in (V_\mathcal{D})^\perp$)} \nonumber \\
        &= \langle u, \phi \rangle_{\hilbert H_1}
        - \langle u, \op A_0 (\op A_0^\ast \op A_0 + \op I)^{-1} \op A_0^\ast \phi \rangle_{\hilbert H_1} \nonumber\\
        &\quad+ \langle \op A_1 u, \op A_1 \phi \rangle_{\hilbert H_2}
        - \langle \op A_1 u, \op A_1 \op A_0 (\op A_0^\ast \op A_0 + \op I)^{-1} \op A_0^\ast \phi \rangle_{\hilbert H_2}
        \qquad
        &&\nonumber\\
        &\quad+ \langle \op A_0^\ast u, \op A_0^\ast \phi \rangle_{\hilbert H_0}
        - \langle \op A_0^\ast u, \op A_0^\ast \op A_0 (\op A_0^\ast \op A_0 + \op I)^{-1} \op A_0^\ast \phi \rangle_{\hilbert H_0}
        &&\text{(by \eqref{eqn:graph_inner_product}).} \label{eqn:graph_inner_product_simplified}
    \end{alignat}

    We observe that \eqref{eqn:graph_inner_product_simplified} can be further simplified: First, we note that the term $\langle \op A_1 u, \op A_1 \op A_0 (\op A_0^\ast \op A_0 + \op I)^{-1} \op A_0^\ast \phi \rangle_{\hilbert H_2}$ is zero, by \eqref{eqn:range_ker_condition}. Second, we note that the term $\langle \op A_0^\ast u, \op A_0^\ast \op A_0 (\op A_0^\ast \op A_0 + \op I)^{-1} \op A_0^\ast \phi \rangle_{\hilbert H_0}$ can be rewritten as
    \begin{align*}
        &\langle \op A_0^\ast u, \op A_0^\ast \op A_0 (\op A_0^\ast \op A_0 + \op I)^{-1} \op A_0^\ast \phi \rangle_{\hilbert H_0} \nonumber\\
        &\qquad= \langle \op A_0^\ast u, (\op A_0^\ast \op A_0 + \op I)(\op A_0^\ast \op A_0 + \op I)^{-1} \op A_0^\ast \phi \rangle_{\hilbert H_0}
        - \langle \op A_0^\ast u, (\op A_0^\ast \op A_0 + \op I)^{-1} \op A_0^\ast \phi \rangle_{\hilbert H_0} \nonumber\\
        &\qquad= \langle \op A_0^\ast u, \op A_0^\ast \phi \rangle_{\hilbert H_0}
        - \langle \op A_0^\ast u, (\op A_0^\ast \op A_0 + \op I)^{-1} \op A_0^\ast \phi \rangle_{\hilbert H_0} \nonumber\\
        &\qquad= \langle \op A_0^\ast u, \op A_0^\ast \phi \rangle_{\hilbert H_0}
        - \langle u, \op A_0 (\op A_0^\ast \op A_0 + \op I)^{-1} \op A_0^\ast \phi \rangle_{\hilbert H_1},
    \end{align*}
    where the final equality follows because $(\op A_0^\ast \op A_0 + \op I)^{-1} \op A_0^\ast \phi \in D(\op A_0)$.

    Continuing from \eqref{eqn:graph_inner_product_simplified}, we have
    \begin{align}\label{eqn:graph_inner_product_simplified_v2}
        0 = 
        \langle u, \phi \rangle_{\hilbert H_1}
        + \langle \op A_1 u, \op A_1 \phi \rangle_{\hilbert H_2}
        =: \langle u, \phi \rangle_{\op A_1}
        \qquad \text{for all} ~~ \phi \in \mathcal{D}.
    \end{align}
    Since $\op A_1$ is closed, $( D(\op A_1), \langle \cdot, \cdot \rangle_{\op A_1})$ is a Hilbert space. Since $\mathcal{D}$ is a core for $\op A_1$, $\mathcal{D}^{\perp_{\op A_1}} = \{ 0 \}$. We conclude from \eqref{eqn:graph_inner_product_simplified_v2} that $u = 0$. The proof is complete.
\end{proof}
We shall need the following abstract result to help us construct a core for $\op A_1^\ast \op A_1$.
\begin{lmm}\label{lem:core_criterion}
    Let $\op A : D(\op A) \subseteq \hilbert H \rightarrow \hilbert H$ be a self-adjoint non-negative operator on a Hilbert space $(\hilbert H, \langle \cdot, \cdot \rangle)$. Suppose that $D \subseteq D(\op A)$ is a dense subset of $\hilbert H$ such that $(\op I + \op A)[D]$ is also dense in $\hilbert H$, then $D$ is a core for $\op A$.
\end{lmm}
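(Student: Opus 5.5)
The plan is to show that the closure of $\op A|_D$, call it $\op B := \overline{\op A|_D}$, coincides with $\op A$. Since $\op A$ is self-adjoint it is closed, so $\op B \subseteq \op A$ and $\op B$ is a closed operator. It then suffices to prove that $D(\op A) \subseteq D(\op B)$. Equivalently, $D$ should be dense in $D(\op A)$ with respect to the graph norm.

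\textbf{Step 1.} I will use the graph inner product $\langle u,v\rangle_{\op A} := \langle u,v\rangle + \langle \op A u,\op A v\rangle$ on $D(\op A)$, which makes $D(\op A)$ a Hilbert space. I then take $u \in D(\op A)$ orthogonal to $D$ in this inner product and aim to show $u=0$. Rather than computing with $\langle\cdot,\cdot\rangle_{\op A}$ directly, I will work with the equivalent inner product $\langle (\op I+\op A)u,(\op I+\op A)v\rangle$. This is equivalent to the graph inner product because $\op A \geq 0$ gives
\[
\|(\op I+\op A)u\|^2 = \|u\|^2 + 2\langle \op A u,u\rangle + \|\op A u\|^2,
\]
which lies between $\|u\|_{\op A}^2$ and $2\|u\|_{\op A}^2$. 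Density of $D$ does not depend on which of the two equivalent norms is used.

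\textbf{Step 2.} With respect to the new inner product, orthogonality of $u$ to $D$ reads $\langle (\op I+\op A)u,(\op I+\op A)\phi\rangle = 0$ for all $\phi \in D$. So $(\op I+\op A)u$ is orthogonal to $(\op I+\op A)[D]$, which is dense in $\hilbert H$ by hypothesis. Hence $(\op I+\op A)u = 0$.

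\textbf{Step 3.} The operator $\op I+\op A$ is injective, since $\|(\op I+\op A)u\|\,\|u\| \geq \langle (\op I+\op A)u,u\rangle \geq \|u\|^2$. Therefore $u=0$, and $D$ is dense in $D(\op A)$ in the graph norm, i.e.\ $D$ is a core for $\op A$.

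The only delicate point is the equivalence of norms in Step 1, which uses nonnegativity of $\op A$; the rest is routine. Density of $D$ in $\hilbert H$ is not actually needed for this argument.
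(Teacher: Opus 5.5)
Your proof is correct, but it takes a different route from the paper's. The paper also sets $A_D := \overline{A|_D}$, but then proves that $I+A_D$ is surjective. For $y\in H$ it picks $x_n\in D$ with $(I+A)x_n\to y$. It uses only the weak bound $\|x_n\|\le\|(I+A)x_n\|$ to get boundedness, extracts a weakly convergent subsequence, and uses weak closedness of the graph of $A_D$ to obtain $(I+A_D)x=y$. It then concludes $A_D=A$: since $I+A_D\subseteq I+A$ and $I+A$ is injective, a surjective map admits no proper injective extension.

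You instead use the full two-sided estimate $\|u\|_A^2\le\|(I+A)u\|^2\le 2\|u\|_A^2$. Under this estimate, $I+A$ is an isometry from $D(A)$, normed by $\|(I+A)\cdot\|$, into $H$, and that norm is equivalent to the graph norm. Density of $(I+A)[D]$ in $H$ therefore pulls back directly to graph-norm density of $D$. This avoids both the weak-compactness step and the extension argument, and yields strong approximation directly. You could even skip orthogonal complements: choose $\phi_n\in D$ with $(I+A)\phi_n\to(I+A)u$, and then $\|\phi_n-u\|_A\le\|(I+A)(\phi_n-u)\|\to 0$.

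Both arguments use only closedness of $A$ and $\langle Au,u\rangle\ge 0$, not self-adjointness as such. As you note, neither genuinely needs density of $D$ in $H$; the paper mentions it but does not use it essentially.

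One wording point: in Step 1 you first take $u$ orthogonal to $D$ in the graph inner product and only afterwards switch to the new one. Orthogonality in one inner product does not imply orthogonality in the other. You should take $u$ orthogonal with respect to the new inner product from the outset, which is what your Step 2 actually does. The equivalence of norms, which also gives completeness of $D(A)$ under the new norm, justifies this.
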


\begin{rmrk}
    The point of this lemma is that if one has a convenient characterization of the set $(\op I + \op A)[D]$, then it may be easier to check for the density of $(\op I + \op A)[D]$ in $\hilbert H$, instead of a direct check on the density of $D$ in $D(\op A)$ under the graph norm of $\op A$. See Lemma \ref{lem:sc_bijection} below.
\end{rmrk}

\begin{proof}
    [Proof of Lemma \ref{lem:core_criterion}]
    Consider the restriction $\op A|_D \subseteq \op A$. Then $\op A|_D$ is densely defined and closable, and we may consider the closed operator $\op A_D := \overline{\op A|_D}$. The assertion of the Lemma is equivalent to $\op A_D = \op A$.

    We first show that $\op I + \op A_D$ is surjective. Let $y \in \hilbert H$. By density of $(\op I + \op A)[D]$ in $\hilbert H$, there exists a sequence $(x_n)_n$ in $ D$ such that $y_n := (\op I + \op A)x_n \rightarrow y$ in $\hilbert H$ as $n\rightarrow \infty$. Consider the identity
    \begin{align*}
        \langle x_n, y_n \rangle = \langle x_n, x_n \rangle + \langle x_n, \op A x_n \rangle.
    \end{align*}
    By applying the Cauchy--Schwarz inequality on the left, and using the non-negativity assumption of $\op A$, we see that $(x_n)_n$ is bounded in $\hilbert H$. Choose a weakly convergent subsequence $(x_{n_k})_k$ with limit $x \in \hilbert H$. Then, $y_{n_k} = x_{n_k} + \op A_D x_{n_k}$ and $x_{n_k}$ converges weakly in $\hilbert H$, and thus $A_D x_{n_k}$ converges weakly in $\hilbert H$. Since $\op A_D$ is weakly closed, we conclude that $x \in D(\op A_D)$ and $\op A_D x = y - x$. So $\op I + \op A_D$ is surjective.

    We thus have $\op I + \op A_D \subseteq \op I + \op A$, where $\op I + \op A_D$ is surjective, and $\op I + \op A$ is injective (since $\op A \geq 0$). Thus, $\op A_D = \op A$ as there is no proper injective extension of a surjective map; see also \cite[Lemma 1.3]{Sch12}.
\end{proof}

We are equipped with the necessary tools for proving Gaffney's (in)equality for cubes and, thus, Theorem \ref{thrm:gaffcub} independently of regularity statements or more elaborate integration by parts formulas.

\section{Gaffney's equality on the unit cube}\label{sect:gaffney_cube}
Throughout this section, we set $\Omega \coloneqq (0,1)^d$ and assume that $d\geq 2$.
\begin{dfntn}\label{defn:sc}
    We define
    \begin{align*}
        \mathcal{S}_\mathcal{C} := 
        \Span{\{ \varphi_i^{(  k)}   e_i: i\in\{1,\ldots,d\},\;   k\in\N_0^d \}},
    \end{align*}
    where, for $i\in\{1,\ldots,d\}$ and $  k = (k_1,\ldots,k_d)^\top\in \N_0^d$,
    \begin{align*}
        \varphi_i^{(  k)}:\Omega\rightarrow \R,\qquad \varphi_i^{(  k)}(x_1,\ldots,x_d) := \cos(\pi k_i x_i) \prod_{\substack{1\leq j \leq d \\ j\neq i}} \sin(\pi k_j x_j),
    \end{align*}
    and $  e_i$ denotes the $i$-th column of the identity matrix in $\R^{d\times d}$.
\end{dfntn}
\begin{lmm}\label{lem:sc_in_domain_rotcirc}
    $\mathcal{S}_\mathcal{C} \subseteq D(\Rotcirc)$.
\end{lmm}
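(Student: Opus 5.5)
The plan is to show directly that each generator $u:=\varphi_i^{(k)}e_i$ is the $\|\cdot\|_{H(\Rot;\Omega)}$-limit of fields in $C_c^\infty(\Omega)^d$. Since $D(\Rotcirc)$ is a linear space, the claim for the span $\mathcal{S}_\mathcal{C}$ then follows. Note that $u$ is the restriction to $\Omega$ of a smooth function on $\R^d$. Moreover, $u$ carries the factor $\sin(\pi k_j x_j)$ for every $j\neq i$, so it vanishes on all faces $\{x_j=0\}$ and $\{x_j=1\}$ with $j\neq i$. On the remaining faces $\{x_i\in\{0,1\}\}$ the field $u$ is purely normal. Heuristically, then, its tangential trace vanishes. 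The approximation is designed to make this rigorous without any trace theory.

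Fix $\eta_\varepsilon\in C_c^\infty((0,1))$ with $0\le \eta_\varepsilon\le 1$ and $\eta_\varepsilon=1$ on $(2\varepsilon,1-2\varepsilon)$. Choose it so that it is supported in $(\varepsilon,1-\varepsilon)$ and satisfies $|\eta_\varepsilon'|\le C/\varepsilon$. Set $\chi_\varepsilon(x):=\prod_{j=1}^d\eta_\varepsilon(x_j)$ and $u_\varepsilon:=\chi_\varepsilon u\in C_c^\infty(\Omega)^d$. By the product rule,
\[
\Rot_c u_\varepsilon=\chi_\varepsilon\,\skw(\Grad u)+\varphi_i^{(k)}\,\skw\bigl(e_i(\nabla\chi_\varepsilon)^\top\bigr).
\]
The key observation is that $\skw(e_ie_i^\top)=0$. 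Hence $\partial_i\chi_\varepsilon$ drops out of the second term, and only the derivatives $\partial_j\chi_\varepsilon$ with $j\neq i$ survive. Each such derivative is supported in the strips $\{x_j\in(\varepsilon,2\varepsilon)\cup(1-2\varepsilon,1-\varepsilon)\}$ and is $O(1/\varepsilon)$ there. On these strips, $|\sin(\pi k_jx_j)|\le \pi k_j\cdot 2\varepsilon$, so $|\varphi_i^{(k)}\partial_j\chi_\varepsilon|\le C(k)$ uniformly in $\varepsilon$. The strips have measure $O(\varepsilon)$, so the second term tends to $0$ in $L^2(\Omega)^{d\times d}$.

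For the remaining terms, $\chi_\varepsilon\to1$ pointwise on $\Omega$ and is bounded by $1$. Dominated convergence therefore gives $u_\varepsilon\to u$ in $L^2(\Omega)^d$ and $\chi_\varepsilon\skw(\Grad u)\to\skw(\Grad u)$ in $L^2(\Omega)^{d\times d}$. Thus $(u_\varepsilon)$ is Cauchy in the graph norm of $\Rot_c$ and converges to $u$ in $L^2$. By the definition $\Rotcirc=\overline{\Rot_c}$, this yields $u\in D(\Rotcirc)$ with $\Rotcirc u=\skw(\Grad u)$.

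The only delicate step is the boundary-layer estimate. One must see that the normal derivative $\partial_i\chi_\varepsilon$, which carries no vanishing factor because of $\cos(\pi k_ix_i)$, is annihilated by $\skw$. For the tangential derivatives, the linear vanishing of the sine factors exactly compensates the $1/\varepsilon$ blow-up. The degenerate cases where some $k_j=0$ ($j\ne i$) make $u\equiv0$ and are trivial.
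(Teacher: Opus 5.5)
Your proof is correct and uses the same strategy as the paper. Both approximate each generator $\varphi_i^{(k)}e_i$ in the graph norm of $\Rot_c$ by $C_c^\infty(\Omega)^d$ fields. Both rely on $\skw(e_ie_i^\top)=0$, so that the cosine factor only needs $L^2$ control while the sine factors need $H^1$-type control.

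The difference is only in how the approximants are built. The paper replaces each one-dimensional factor by $C_c^\infty((0,1))$ approximants: in $L^2$ for $\cos(\pi k_i\cdot)$, and in $H^1$ for $\sin(\pi k_j\cdot)$, using that $\sin(\pi k_j\cdot)\in H^1_0((0,1))$. It then takes tensor products. Your explicit cutoff with the boundary-layer estimate proves that $H^1_0$-type approximation by hand, using the linear vanishing of the sines at the endpoints.
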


\begin{proof}
    Let $1\leq i\leq d$ and $  k = (k_1,\ldots,k_d)^\top\in \N_0^d$ be fixed, and set $  v:= \varphi_i^{(  k)}   e_i$. Note that $  v\in C^\infty(\overline{\Omega})^d\subseteq D(\Rot)$. We need to show that there exists a sequence $(  v_n)_n$ in $ C_c^\infty(\Omega)^d$ such that 
    \begin{align}\label{Sc approx dom}
    \|  v_n -   v\|_{H(\Rot;\Omega)}^2 = \|  v_n -   v\|_{L^2(\Omega)^d}^2 + \|\Rot (  v_n -   v)\|_{L^2(\Omega)^{d\times d}}^2  \xrightarrow[n\to\infty]{} 0.
    \end{align}
    To this end, note $\cos(\pi k_i\,\cdot)\in L^2(T)$ and $\sin(\pi k_j\,\cdot)\in H^1_0(T)$ for all $1\leq j\leq d$, where $T:=(0,1)$. Hence, there exist sequences $(\psi_n^{(1)})_n,\ldots,(\psi_n^{(d)})_n$ in $ C_c^\infty(T)$ such that for $j\neq i$
    \begin{align}\label{sincos}
        \lim_{n\rightarrow \infty} \|\cos(\pi k_i\,\cdot) - \psi_n^{(i)}\|_{L^2(T)} = 0,\quad  \lim_{n\rightarrow \infty} \|\sin(\pi k_j\,\cdot) - \psi_n^{(j)}\|_{H^1(T)} = 0.
    \end{align}
    For $n\in \N$, we set $  v_n := \varphi_{i,n}^{(  k)}   e_i \in C_c^\infty(\Omega)^d$, where 
    \begin{align*}
        \varphi_{i,n}^{(  k)}:\Omega\rightarrow\R,\qquad \varphi_{i,n}^{(  k)}  (x_1,\ldots,x_d) := \psi_n^{(i)}(x_i)\prod_{\substack{1\leq j \leq d \\ j\neq i}} \psi_n^{(j)}(x_j).
    \end{align*}
    Then, in view of \eqref{sincos}, we have that 
    \begin{align*}
        \|  v_n -   v\|_{L^2(\Omega)^d} = \|(\varphi_{i,n}^{(  k)}- \varphi_{i}^{(  k)})  e_i\|_{L^2(\Omega)^d} = \|\varphi_{i,n}^{(  k)}- \varphi_{i}^{(  k)}\|_{L^2(\Omega)} \xrightarrow[n\to\infty]{} 0
    \end{align*}
    and
    \begin{align*}
        \|\Rot (  v_n -   v)\|_{L^2(\Omega)^{d\times d}} &= \frac{1}{\sqrt{2}}\left\|\sum_{j = 1}^d \partial_j (\varphi_{i,n}^{(  k)}- \varphi_{i}^{(  k)}) (  e_i   e_j^\top -   e_j   e_i^\top)\right\|_{L^2(\Omega)^{d\times d}} \\ &\leq \sum_{\substack{j = 1 \\ j\neq i}}^d \|\partial_j (\varphi_{i,n}^{(  k)}- \varphi_{i}^{(  k)})\|_{L^2(\Omega)} \xrightarrow[n\to\infty]{} 0,
    \end{align*}
    i.e., \eqref{Sc approx dom} holds. We conclude that $  v\in \overline{C_c^\infty(\Omega)^d}^{\| \cdot \|_{H(\Rot;\Omega)}} = D(\Rotcirc)$.
\end{proof}

\begin{cor}\label{cor:sc_in_domain_rotcircstarrotcirc_and_div}
    We have $\mathcal{S}_\mathcal{C} \subseteq D((\Rotcirc)^\ast (\Rotcirc))$ and $\mathcal{S}_\mathcal{C} \subseteq D(\Rotcirc) \cap D(\div)$.
\end{cor}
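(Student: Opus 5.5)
By linearity it suffices to treat a single basis field $v=\varphi_i^{(k)}e_i$. We already know $v\in D(\Rotcirc)$ from Lemma \ref{lem:sc_in_domain_rotcirc}. Moreover $v\in C^\infty(\overline{\Omega})^d$, so the remaining claims reduce to two integration-by-parts checks against test functions. The point to watch in each is that the boundary terms vanish because of the sine factors, or because the test function is compactly supported.

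\emph{Membership in $D(\div)$.} Recall $\div=-(\gradcirc)^\ast$. For $w\in C_c^\infty(\Omega)$, classical integration by parts gives $\langle v,\grad_c w\rangle=-\langle \partial_i\varphi_i^{(k)},w\rangle$, since $w$ has compact support. Hence $v\in D(\grad_c^\ast)=D((\gradcirc)^\ast)$ with $\div v=\partial_i\varphi_i^{(k)}$. This step is routine.

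\emph{Membership in $D((\Rotcirc)^\ast\Rotcirc)$.} First compute $\Rotcirc v=\Rot v=\skw(\Grad v)$. Its entries are $\pm\frac{1}{\sqrt2}\partial_j\varphi_i^{(k)}$ in positions $(i,j)$ and $(j,i)$ for $j\neq i$, and the diagonal is zero. This matrix field $W$ is smooth on $\overline\Omega$.

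By Remark \ref{rmk:adjoints_of_key_operators}, $(\Rotcirc)^\ast=-\Div\skw$ with $\Div=-(\Gradcirc)^\ast$, so it suffices to show $\skw W\in D((\Gradcirc)^\ast)=D(\Grad_c^\ast)$. Testing against $\Phi\in C_c^\infty(\Omega)^{d\times d}$ again produces no boundary terms, because $\Phi$ is compactly supported. Thus $\skw W$, being smooth up to the boundary, lies in $D(\Div)$, and its divergence is the classical row-wise divergence.

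So $W\in D((\Rotcirc)^\ast)$ with $(\Rotcirc)^\ast W=-\Div\skw W\in L^2$. Since the adjoint is defined through $\Gradcirc$, that is with compactly supported tests, smoothness up to the boundary is all that is needed here. The boundary conditions carried by $\sin$ matter only for the membership $v\in D(\Rotcirc)$, which Lemma \ref{lem:sc_in_domain_rotcirc} already supplies.

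One point still needs checking: that $(\Rotcirc)^\ast$ really coincides with $-\Div\skw$ on such smooth fields. I would verify this directly rather than rely on Remark \ref{rmk:adjoints_of_key_operators}. Take $W\in C^\infty(\overline\Omega)^{d\times d}$ and $u\in C_c^\infty(\Omega)^d$; since $C_c^\infty(\Omega)^d$ is a core for $\Rotcirc$, testing on it suffices. Using $\langle \skw(\Grad u),W\rangle=\langle \Grad u,\skw W\rangle$ (as $\skw$ is symmetric) and integrating by parts, one gets $\langle \Rot_c u,W\rangle=\langle u,-\Div(\skw W)\rangle$. Hence $W\in D(\Rot_c^\ast)=D((\Rotcirc)^\ast)$.

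This adjoint identification is the only step that is not immediate. Everything else is a direct computation with the explicit trigonometric products.
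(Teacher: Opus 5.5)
Your proposal is correct and follows the paper's route. You use Lemma \ref{lem:sc_in_domain_rotcirc} for membership in $D(\Rotcirc)$, and then the smoothness $v\in C^\infty(\overline{\Omega})^d$ to see that $\div v$ and $-\Div\skw\Rotcirc v$ are classical $L^2$ fields, justifying the adjoint-domain memberships by integration by parts against compactly supported test fields more explicitly than the paper does. (One harmless slip: the test fields for $D((\Gradcirc)^\ast)$ are vector fields in $C_c^\infty(\Omega)^d$, not matrix fields, as your final direct verification of $(\Rotcirc)^\ast=-\Div\skw$ on smooth fields correctly uses.)
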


\begin{proof}
    By Lemma \ref{lem:sc_in_domain_rotcirc} and $D((\Rotcirc)^\ast) = D(-\Div \skw)$, the proof of the first containment reduces to checking that $-\Div \skw{ \Rotcirc   u} \in L^2(\Omega)^d$, whenever $  u \in \mathcal{S}_\mathcal{C}$, which is immediate because $  u\in C^\infty(\overline{\Omega})^d$.
    
    In a similar fashion, we have $\div\,   u \in L^2(\Omega)$, which gives $  u \in H(\div;\Omega) = D(\div)$, and hence the second inclusion .
\end{proof}

\begin{lmm}\label{lem:firstformula} Let $1\leq i \leq d$ and $  k = (k_1,\ldots,k_d)^\top \in \N_0^d$, $  u = \varphi_i^{(  k)}   e_i$. Then, $  u \in D((\Rotcirc)^\ast (\Rotcirc))$ and
\begin{align}\label{eqn:sc_calculation_step4}
    \begin{split}
        (\Rotcirc)^\ast (\Rotcirc)   u = \pi^2\sum_{j = 1}^d \left(k_j^2 \varphi_i   e_i - k_i k_j \varphi_j   e_j\right) = \pi^2\lvert   k\rvert^2\varphi_i   e_i - \pi^2 k_i \sum_{j = 1}^d k_j \varphi_j   e_j.
        \end{split}
    \end{align}    
\end{lmm}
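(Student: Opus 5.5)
The domain statement $u\in D((\Rotcirc)^\ast\Rotcirc)$ is already contained in Corollary \ref{cor:sc_in_domain_rotcircstarrotcirc_and_div}. So it remains to compute $(\Rotcirc)^\ast\Rotcirc u$. By Lemma \ref{lem:sc_in_domain_rotcirc}, $u\in D(\Rotcirc)$. Since $u\in C^\infty(\overline\Omega)^d$, the closure $\Rotcirc u$ coincides with the classical expression $\skw(\Grad u)$. This holds because the approximating sequence there converges in the graph norm, and for smooth $u$ the distributional $\Rot$ is the pointwise one. Likewise, Remark \ref{rmk:adjoints_of_key_operators} gives $(\Rotcirc)^\ast=-\Div\skw$. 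Since $\skw\Rotcirc u$ is smooth up to the boundary, $\Div$ acts on it classically. Hence
\[
(\Rotcirc)^\ast\Rotcirc u=-\Div\bigl(\skw\skw\Grad u\bigr)=-\sqrt{2}\,\Div\bigl(\skw\Grad u\bigr)=-\Div\bigl(\Grad u-(\Grad u)^\top\bigr),
\]
using $\skw\skw V=\tfrac12\bigl((V-V^\top)-(V^\top-V)\bigr)=\sqrt2\,\skw V$.

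Next, I evaluate the row divergence componentwise. Writing $u=\varphi_i e_i$, we have $\Grad u=e_i(\nabla\varphi_i)^\top$, whose only nonzero row is the $i$-th. Its row divergence is $\Delta\varphi_i\,e_i$. The transpose $(\nabla\varphi_i)e_i^\top$ has $(l,i)$ entry $\partial_l\varphi_i$, so its row divergence is $\sum_l \partial_i\partial_l\varphi_i\,e_l$. Therefore
\[
(\Rotcirc)^\ast\Rotcirc u=-\Delta\varphi_i\,e_i+\sum_{l=1}^d\partial_l\partial_i\varphi_i\,e_l .
\]

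Now I insert the trigonometric structure. Each factor satisfies $\partial^2_{x_j}\sin(\pi k_jx_j)=-\pi^2k_j^2\sin(\pi k_j x_j)$, and the same holds for the cosine factor. Hence $-\Delta\varphi_i=\pi^2|k|^2\varphi_i$.

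For the mixed derivatives with $l\neq i$, differentiating in $x_i$ turns $\cos(\pi k_ix_i)$ into $-\pi k_i\sin(\pi k_ix_i)$. Differentiating in $x_l$ turns $\sin(\pi k_lx_l)$ into $\pi k_l\cos(\pi k_lx_l)$. The product of the remaining factors is exactly $\varphi_l$, so $\partial_l\partial_i\varphi_i=-\pi^2k_ik_l\varphi_l$. For $l=i$ we have $\partial_i^2\varphi_i=-\pi^2k_i^2\varphi_i$, which is consistent with the same formula $-\pi^2k_ik_l\varphi_l$ at $l=i$.

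Substituting these gives $\pi^2|k|^2\varphi_ie_i-\pi^2k_i\sum_j k_j\varphi_je_j$. This is the second expression in \eqref{eqn:sc_calculation_step4}, and the first is just the rewriting $|k|^2=\sum_jk_j^2$.

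The only delicate point is the justification that the abstract operators act classically on $u$. For $\Rotcirc$ this is immediate from its closure property. For $(\Rotcirc)^\ast=-\Div\skw$, I would check directly that for all $w\in C_c^\infty(\Omega)^d$,
\[
\langle \Rot_c w,\Rotcirc u\rangle=\langle w,-\Div\skw\Rotcirc u\rangle,
\]
which follows from integration by parts with no boundary terms, since $w$ has compact support. Closure then extends this identity to all of $D(\Rotcirc)$, which identifies the adjoint's action on $u$. The remaining sign and index bookkeeping in the mixed derivatives is the main place where an error could slip in.
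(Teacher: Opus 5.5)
Your proposal is correct and follows the paper's proof essentially step for step. It takes the domain inclusion from Corollary \ref{cor:sc_in_domain_rotcircstarrotcirc_and_div}, uses $(\Rotcirc)^\ast\Rotcirc u=-\Div\bigl(\Grad u-(\Grad u)^\top\bigr)$ with $\Grad u=e_i(\nabla\varphi_i)^\top$, and then applies the trigonometric relations $\partial_l\partial_i\varphi_i=-\pi^2k_ik_l\varphi_l$ and $-\Delta\varphi_i=\pi^2|k|^2\varphi_i$. Your check, by testing against $C_c^\infty(\Omega)^d$, that $\Rotcirc$ and $(\Rotcirc)^\ast$ act classically on these smooth fields is sound, and the paper leaves it implicit.
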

\begin{proof}
 Since $  u \in D((\Rotcirc)^\ast (\Rotcirc))$ by Corollary \ref{cor:sc_in_domain_rotcircstarrotcirc_and_div}, $(\Rotcirc)^\ast (\Rotcirc)   u$ is well-defined. To keep the notation compact, we shall suppress the dependence on $  k$, and rather write, for instance, $  u = \varphi_i   e_i$. First, noting that $\Grad  u =   e_i (\nabla \varphi_i)^\top$, we have that
    \begin{align*}
        \sqrt{2}\,\Rotcirc   u =   e_i (\nabla \varphi_i)^\top - (\nabla \varphi_i)   e_i^\top = \sum_{j = 1}^d \partial_j \varphi_i (  e_i   e_j^\top -   e_j   e_i^\top).
    \end{align*}
    Upon recalling that $(\Rotcirc)^\ast = -\Div \skw$, it follows that
    \begin{align}
        (\Rotcirc)^\ast (\Rotcirc)   u
         = - \Div(\sqrt{2}\,\Rotcirc   u) = \sum_{j = 1}^d ( (\partial_{ij}^2 \varphi_i)   e_j - (\partial_j^2 \varphi_i)   e_i ). \label{eqn:sc_calculation_step2}
    \end{align}
    We can further rewrite the terms in sum \eqref{eqn:sc_calculation_step2} by using that
    \begin{align*}
        \partial_{ij}^2 \varphi_i = -\pi^2 k_i k_j \varphi_j
    \end{align*}
    for any $1\leq j\leq d$. In particular, \eqref{eqn:sc_calculation_step2} becomes
    \begin{align*}
    \begin{split}
        (\Rotcirc)^\ast (\Rotcirc)   u = \pi^2\sum_{j = 1}^d \left(k_j^2 \varphi_i   e_i - k_i k_j \varphi_j   e_j\right) = \pi^2\lvert   k\rvert^2\varphi_i   e_i - \pi^2 k_i \sum_{j = 1}^d k_j \varphi_j   e_j,
        \end{split}
    \end{align*}
which is the assertion.
\end{proof}

\begin{prop}\label{prop:sc_vs_eigenspaces}
    We have
    \begin{align}
        \mathcal{S}_\mathcal{C} 
        &\subseteq \Span{ \left\{ \bigcup_{\lambda \leq 0} N\left(\lambda + (\Rotcirc)^\ast (\Rotcirc) \right) \right\}}. \label{eqn:sc_includes}
    \end{align}
\end{prop}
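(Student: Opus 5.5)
The plan is to fix a multi-index $k\in\N_0^d$ and work inside the finite-dimensional space $W_k:=\Span\{\varphi_j^{(k)}e_j : 1\le j\le d\}$. By Lemma \ref{lem:firstformula}, $(\Rotcirc)^\ast(\Rotcirc)$ maps $W_k$ into itself. Writing $u=\sum_j c_j\varphi_j^{(k)}e_j$ with coefficient vector $c\in\R^d$, linearity and \eqref{eqn:sc_calculation_step4} give
\begin{align*}
(\Rotcirc)^\ast(\Rotcirc)u=\pi^2\sum_{j=1}^d\bigl(|k|^2c_j-k_j(k\cdot c)\bigr)\varphi_j^{(k)}e_j .
\end{align*}
So on coefficients the operator acts as the matrix $M_k:=\pi^2(|k|^2 I-kk^\top)$. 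This matrix is symmetric and positive semidefinite; up to the factor $\pi^2$ it is $|k|^2$ times the orthogonal projection onto $k^\perp$ (and $M_k=0$ if $k=0$).

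Next I diagonalise $M_k$ explicitly.
\begin{itemize}
\item The vector $c=k$ satisfies $M_kk=0$. Hence $u=\sum_j k_j\varphi_j^{(k)}e_j\in N\bigl((\Rotcirc)^\ast(\Rotcirc)\bigr)$, which corresponds to $\lambda=0$.
\item Every $c\perp k$ satisfies $M_kc=\pi^2|k|^2c$. Hence $u\in N\bigl(-\pi^2|k|^2+(\Rotcirc)^\ast(\Rotcirc)\bigr)$, which corresponds to $\lambda=-\pi^2|k|^2\le 0$.
\item Since $\R^d=\Span\{k\}\oplus k^\perp$ (and trivially when $k=0$), every basis vector $e_i$ splits as $e_i=\tfrac{k_i}{|k|^2}k+\bigl(e_i-\tfrac{k_i}{|k|^2}k\bigr)$ for $k\neq0$. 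Therefore $\varphi_i^{(k)}e_i$ is a sum of two eigenvectors with eigenvalue parameters $\lambda\in\{0,-\pi^2|k|^2\}$.
\end{itemize}

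Taking the span over all $i$ and $k$ then gives \eqref{eqn:sc_includes}.

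The only subtle point is making sure these functions really lie in the kernels as elements of $D((\Rotcirc)^\ast(\Rotcirc))$, and are not just formal solutions. This is settled by Corollary \ref{cor:sc_in_domain_rotcircstarrotcirc_and_div} together with linearity of the operator on $\mathcal{S}_\mathcal{C}$.

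I also have to handle degenerate indices. When some $k_j=0$, the function $\varphi_i^{(k)}$ may vanish identically; such terms contribute $0$ and are harmless. The case $k=0$ must be checked directly: there $M_0=0$, so every $\varphi_i^{(0)}e_i$ lies in the kernel, i.e. $\lambda=0$. Apart from this bookkeeping, the main step is simply recognising the matrix structure $|k|^2I-kk^\top$, after which the proof is elementary linear algebra.
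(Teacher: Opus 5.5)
Your proof is correct and is essentially the paper's argument. The paper writes $\varphi_i^{(k)}e_i=\frac{k_i}{|k|^2}u_1+u_2$ with $u_1=\sum_n k_n\varphi_n^{(k)}e_n\in N\bigl((\Rotcirc)^\ast(\Rotcirc)\bigr)$ and $u_2\in N\bigl((\Rotcirc)^\ast(\Rotcirc)-\pi^2|k|^2\bigr)$, which is exactly your splitting $e_i=\frac{k_i}{|k|^2}k+\bigl(e_i-\frac{k_i}{|k|^2}k\bigr)$ under $M_k=\pi^2(|k|^2I-kk^\top)$; the only cosmetic difference is that for $k=0$ the paper notes $\varphi_i^{(0)}\equiv 0$ (since $d\ge 2$), whereas you use $M_0=0$, and either settles that case.
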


\begin{proof}
    Fix $1\leq i \leq d$ and $  k = (k_1,\ldots,k_d)^\top \in \N_0^d$. By linearity, it suffices to check that $  u = \varphi_i^{(  k)}   e_i$ can be expressed as a finite linear combination of eigenfunctions of $(\Rotcirc)^\ast (\Rotcirc)$. Again, we shall suppress the dependence on $  k$.

    $  k \neq   0$:  We shall use Lemma \ref{lem:firstformula}, \eqref{eqn:sc_calculation_step4}, to express $(\Rotcirc)^\ast (\Rotcirc)   u$ as a finite linear combination of eigenfunctions of $(\Rotcirc)^\ast (\Rotcirc)$. We write
    \begin{align*}
          u = \frac{k_i}{\lvert   k\rvert^2}  u_1 +   u_2,\qquad\text{where}\qquad   u_1 := \sum_{n=1}^d k_n \varphi_n   e_n,\quad \text{and} \quad   u_2 :=   u - \frac{k_i}{\lvert   k\rvert^2}  u_1,
    \end{align*}
    and claim that $  u_1\in N\left((\Rotcirc)^\ast (\Rotcirc)\right)$ and $  u_2\in N\left((\Rotcirc)^\ast (\Rotcirc) - \pi^2 |  k|^2 \right)$.

    Indeed, note that $  u_1\in D\left((\Rotcirc)^\ast (\Rotcirc)\right)$ and $  u_2\in D\left((\Rotcirc)^\ast (\Rotcirc)- \pi^2 |  k|^2\right)=D\left((\Rotcirc)^\ast (\Rotcirc)\right)$ by Corollary \ref{cor:sc_in_domain_rotcircstarrotcirc_and_div}, and that in view of \eqref{eqn:sc_calculation_step4} there holds
    \begin{align*}
        (\Rotcirc)^\ast (\Rotcirc)  u_1 &= \sum_{n=1}^d k_n \left[\pi^2\lvert   k\rvert^2 \varphi_n   e_n - \pi^2 k_n \sum_{j = 1}^d k_j \varphi_j   e_j \right] \\
        &= \pi^2 \sum_{n=1}^d \sum_{j = 1}^d k_j^2 k_n \varphi_n   e_n - \pi^2 \sum_{n=1}^d \sum_{j = 1}^d k_n^2 k_j \varphi_j   e_j =   0,
    \end{align*}
    and 
    \begin{align*}
        (\Rotcirc)^\ast (\Rotcirc)   u_2 = (\Rotcirc)^\ast (\Rotcirc)   u = \pi^2 \lvert   k\rvert^2 \varphi_i   e_i - \pi^2 k_i \sum_{j=1}^d k_j \varphi_j   e_j = \pi^2 \lvert   k\rvert^2   u_2.
    \end{align*}
    Altogether, $u \in N\left((\Rotcirc)^\ast (\Rotcirc)\right) + N\left((\Rotcirc)^\ast (\Rotcirc)-\pi^2 \lvert   k\rvert^2\right)$.    

    $k = 0$: This gives $\varphi_i^{(  k)} = \varphi_i^{(  0)} \equiv 0$, which implies that $u \equiv 0$ as $d \geq 2$, and so the inclusion \eqref{eqn:sc_includes} is trivially satisfied. 
    This concludes the proof.
\end{proof}
We now strengthen Lemma \ref{lem:sc_in_domain_rotcirc} as follows.
\begin{prop}\label{prop:sc_core_rotcirc}
    $\mathcal{S}_\mathcal{C}$ is a core for $\Rotcirc$.
\end{prop}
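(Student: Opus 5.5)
The plan is to apply Lemma \ref{lem:core_criterion} to the self-adjoint, non-negative operator $\op A := (\Rotcirc)^\ast \Rotcirc$ on $\hilbert H = L^2(\Omega)^d$ with $D := \mathcal{S}_\mathcal{C}$. This will show that $\mathcal{S}_\mathcal{C}$ is a core for $(\Rotcirc)^\ast\Rotcirc$. I will then pass from a core of $(\Rotcirc)^\ast\Rotcirc$ to a core of $\Rotcirc$. By Corollary \ref{cor:sc_in_domain_rotcircstarrotcirc_and_div} we already know $\mathcal{S}_\mathcal{C} \subseteq D(\op A)$. It therefore remains to verify the two density hypotheses of Lemma \ref{lem:core_criterion}.

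\emph{Density of $\mathcal{S}_\mathcal{C}$ in $L^2(\Omega)^d$.} This works componentwise. For fixed $i$, the family $\{\cos(\pi k\,\cdot)\}_{k\in\N_0}$ is a complete orthogonal system in $L^2(0,1)$, and so is $\{\sin(\pi k\,\cdot)\}_{k\in\N}$. Entries with some $k_j=0$, $j\neq i$, vanish identically and do not matter. Hence the products $\varphi_i^{(k)}$ form the tensor product of complete orthogonal systems, which is complete in $L^2((0,1)^d)$. I would prove this completeness in the standard way: if $f\perp$ all products, then Fubini and completeness in one variable give, step by step, that $f=0$ a.e. So $\operatorname{span}\{\varphi_i^{(k)}e_i\}$ is dense in the $i$-th component for every $i$, and $\mathcal{S}_\mathcal{C}$ is dense in $L^2(\Omega)^d$. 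This is the only place where $L^2$-completeness of the Fourier bases enters.

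\emph{Density of $(\op I+\op A)[\mathcal{S}_\mathcal{C}]$.} I will show that this set equals $\mathcal{S}_\mathcal{C}$. Fix $k\in\N_0^d$ and let
\[
W_k := \operatorname{span}\{\varphi_j^{(k)}e_j : 1\le j\le d\}.
\]
This space is finite-dimensional, and $\mathcal{S}_\mathcal{C} = \sum_k W_k$. Lemma \ref{lem:firstformula} shows that $\op A$ maps $W_k$ into $W_k$, since $\op A(\varphi_i e_i)$ is a combination of the $\varphi_j e_j$ with the same $k$. Since $\op A\ge 0$, we have $\langle (\op I+\op A)u,u\rangle \ge \|u\|^2$, so $\op I+\op A$ is injective on $W_k$. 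Being a linear self-map of a finite-dimensional space, it is then bijective onto $W_k$. Summing over $k$ gives $(\op I+\op A)[\mathcal{S}_\mathcal{C}] = \mathcal{S}_\mathcal{C}$, which is dense by the previous step. Alternatively, one could use the eigen-decomposition from Proposition \ref{prop:sc_vs_eigenspaces}. Lemma \ref{lem:core_criterion} now yields that $\mathcal{S}_\mathcal{C}$ is a core for $\op A$.

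\emph{From a core of $\op A$ to a core of $\Rotcirc$.} First, $D((\Rotcirc)^\ast\Rotcirc)$ is a core for $\Rotcirc$ by the standard argument. If $u\in D(\Rotcirc)$ is orthogonal in the $\Rotcirc$-graph inner product to $D(\op A)$, then $\langle u,(\op I+\op A)v\rangle=0$ for all $v\in D(\op A)$. Since $\op I+\op A$ is surjective, this forces $u=0$. Next, for $v\in D(\op A)$ we have
\[
\|\Rotcirc v\|^2 = \langle \op A v, v\rangle \le \|\op A v\|\,\|v\|,
\]
so convergence in the $\op A$-graph norm implies convergence in the $\Rotcirc$-graph norm. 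Hence any $u\in D(\Rotcirc)$ is approximated first by elements of $D(\op A)$ and then by elements of $\mathcal{S}_\mathcal{C}$ in the $\Rotcirc$-graph norm.

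I expect the main obstacle to be the completeness of the tensor-product trigonometric systems in $L^2(\Omega)$. This step should be written out carefully via Fubini. The remaining steps are soft.
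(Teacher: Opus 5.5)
Your proposal is correct and follows essentially the same route as the paper. You apply Lemma \ref{lem:core_criterion} to $(\Rotcirc)^\ast\Rotcirc$ with $D=\mathcal{S}_\mathcal{C}$, use Fourier completeness for density and invariant finite-dimensional trigonometric blocks to get $(\op I+(\Rotcirc)^\ast\Rotcirc)[\mathcal{S}_\mathcal{C}]=\mathcal{S}_\mathcal{C}$, and then pass from a core of $(\Rotcirc)^\ast\Rotcirc$ to a core of $\Rotcirc$. The differences are cosmetic: you use single-frequency blocks $W_k$ instead of the frequency boxes $\mathcal{S}_\mathcal{C}(n)$ of Lemma \ref{lem:sc_bijection}, and you prove the final step directly (surjectivity of $\op I+\op A$ together with $\|\Rotcirc v\|^2\le\|\op A v\|\,\|v\|$) where the paper cites \cite[Prop.~3.18]{Sch12}.
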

To prove Proposition \ref{prop:sc_core_rotcirc}, we need a preparatory lemma:
\begin{lmm}\label{lem:sc_bijection}
    $\op I + (\Rotcirc)^{\ast}(\Rotcirc) : \mathcal{S}_\mathcal{C} \rightarrow \mathcal{S}_\mathcal{C}$ is a bijection.
\end{lmm}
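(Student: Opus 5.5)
The plan is to decompose $\mathcal{S}_\mathcal{C}$ into finite-dimensional pieces indexed by the frequency vector and to show that $\op I + (\Rotcirc)^\ast(\Rotcirc)$ acts on each piece as an invertible linear map. For fixed $k\in\N_0^d$ set $W_k := \Span\{\varphi_i^{(k)} e_i : 1\leq i\leq d\}$, a space of dimension at most $d$. Then $\mathcal{S}_\mathcal{C} = \sum_{k} W_k$. For $k\neq k'$ the functions $\varphi_i^{(k)}$ and $\varphi_i^{(k')}$ are $L^2$-orthogonal, since they are products of one-dimensional sine and cosine factors with distinct frequencies in at least one coordinate. Hence the sum is $L^2$-orthogonal, so it is direct.

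\textbf{Invariance.} By Lemma \ref{lem:firstformula}, applying $(\Rotcirc)^\ast(\Rotcirc)$ to $\varphi_i^{(k)}e_i$ gives a linear combination of $\varphi_j^{(k)}e_j$, $j=1,\dots,d$, all with the same $k$. So $T:=\op I + (\Rotcirc)^\ast(\Rotcirc)$ maps each $W_k$ into itself, and therefore maps $\mathcal{S}_\mathcal{C}$ into $\mathcal{S}_\mathcal{C}$. This is well defined on $\mathcal{S}_\mathcal{C}$ by Corollary \ref{cor:sc_in_domain_rotcircstarrotcirc_and_div}.

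\textbf{Injectivity.} The operator $(\Rotcirc)^\ast(\Rotcirc)$ is non-negative. For $u$ in its domain, $\langle Tu,u\rangle = \|u\|^2 + \|\Rotcirc u\|^2 \geq \|u\|^2$, so $T$ is injective on $\mathcal{S}_\mathcal{C}$.

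\textbf{Surjectivity.} This is the only step requiring care. By the invariance above, $T|_{W_k}\colon W_k\to W_k$ is an injective linear map of a finite-dimensional space, hence bijective. Given $w\in\mathcal{S}_\mathcal{C}$, write $w=\sum_{k\in F} w_k$ with $F$ finite and $w_k\in W_k$. Pick $u_k\in W_k$ with $Tu_k=w_k$; then $u=\sum_k u_k\in\mathcal{S}_\mathcal{C}$ satisfies $Tu=w$.

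Alternatively, Proposition \ref{prop:sc_vs_eigenspaces} gives surjectivity explicitly. In the decomposition $u=\frac{k_i}{|k|^2}u_1+u_2$, $T$ acts on $u_1$ as $1$ and on $u_2$ as $1+\pi^2|k|^2$. Both $u_1$ and $u_2$ lie in $W_k$, so $T^{-1}(\varphi_i^{(k)}e_i) = \frac{k_i}{|k|^2}u_1 + (1+\pi^2|k|^2)^{-1}u_2 \in \mathcal{S}_\mathcal{C}$.

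The main subtlety I expect is bookkeeping rather than analysis. One must check that $u_1,u_2$ genuinely lie in $\mathcal{S}_\mathcal{C}$, which holds since each is a combination of the $\varphi_j^{(k)}e_j$. One must also handle the degenerate case $k=0$, where $W_0=\{0\}$ because $d\geq 2$. Some $\varphi_j^{(k)}$ vanish when $k$ has zero entries, but that only reduces $\dim W_k$ and does not affect the argument.
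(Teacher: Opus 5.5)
Your proof is correct and matches the paper's argument: invariance of $\mathcal{S}_\mathcal{C}$ comes from the explicit formula in Lemma \ref{lem:firstformula}, injectivity from the non-negativity of $(\Rotcirc)^\ast(\Rotcirc)$, and surjectivity from rank--nullity on finite-dimensional invariant subspaces. The only difference is that you use the single-frequency blocks $W_k$ where the paper uses the nested spaces $\mathcal{S}_\mathcal{C}(n)$ with frequencies in $[0,n]^d$; the $L^2$-orthogonality of the $W_k$ that you prove is harmless but is not needed for the argument.
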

\begin{proof}
    First, we note that $(\Rotcirc)^\ast (\Rotcirc)$ is non-negative, and thus $\op I + (\Rotcirc)^\ast (\Rotcirc) : D((\Rotcirc)^\ast (\Rotcirc)) \rightarrow L^2(\Omega)^d$ is a bijection. Moreover, the computations in Proposition \ref{prop:sc_vs_eigenspaces} (see \eqref{eqn:sc_calculation_step4}) show that $(\Rotcirc)^\ast (\Rotcirc)$ maps $\mathcal{S}_\mathcal{C}$ to $\mathcal{S}_\mathcal{C}$. Thus, with a slight abuse of notation, the map $\op I + (\Rotcirc)^\ast (\Rotcirc) : \mathcal{S}_\mathcal{C} \rightarrow \mathcal{S}_\mathcal{C}$ is well-defined and injective.
    
    It remains to show that $\op I + (\Rotcirc)^\ast (\Rotcirc)$ maps $\mathcal{S}_\mathcal{C}$ onto $\mathcal{S}_\mathcal{C}$. For each $n \in \N_0$, consider the finite-dimensional subspace $\mathcal{S}_\mathcal{C}(n)$ of $\mathcal{S}_\mathcal{C}$ by restricting the frequency $  k$ component-wise to a maximum of $n$:
    \begin{align*}
        \mathcal{S}_\mathcal{C}(n) := \Span{\{ \varphi_i^{(  k)}   e_i: i\in\{1,\ldots,d\},\;   k\in\N_0^d \cap [0,n]^d \}}.
    \end{align*}
    Then, trivially, $\mathcal{S}_\mathcal{C}= \bigcup_{n\in \N_0} \mathcal{S}_\mathcal{C}(n)$, and \eqref{eqn:sc_calculation_step4} shows that $\op I + (\Rotcirc)^\ast (\Rotcirc) : \mathcal{S}_\mathcal{C}(n) \rightarrow \mathcal{S}_\mathcal{C}(n)$ is well-defined and injective, and hence also bijective by the rank-nullity theorem on $\mathcal{S}_\mathcal{C}(n)$. The proof is complete.
\end{proof}
\begin{proof}
    [Proof of Proposition \ref{prop:sc_core_rotcirc}]
    By \cite[Prop. 3.18]{Sch12}, it suffices to show that $\mathcal{S}_\mathcal{C}$ is a core for $(\Rotcirc)^\ast(\Rotcirc)$. We shall show this by applying Lemma \ref{lem:core_criterion}, with $\op A = (\Rotcirc)^\ast (\Rotcirc)$ and $D = \mathcal{S}_\mathcal{C}$. 
    
    Since $\{ \sin{(k\pi \cdot)} : k \in \N \}$ and $\{ \cos{(k\pi \cdot)} : k \in \N_0 \}$ are complete orthogonal systems in $L^2(0,1)$, the set $\mathcal{S}_\mathcal{C}$ is dense in $L^2(\Omega)^d$. Moreover, Lemma \ref{lem:sc_bijection} gives $(\op I + (\Rotcirc)^\ast (\Rotcirc))[\mathcal{S}_\mathcal{C}] = \mathcal{S}_\mathcal{C}$. It follows that the hypotheses of Lemma \ref{lem:core_criterion} are satisfied, and the proof is complete.
\end{proof}
We also need the mapping properties of $(\op I + (\gradcirc)(\gradcirc)^\ast)^{-1}$ under $\mathcal{S}_\mathcal{C}$.
Below we prove a property that explicitly uses the trigonometric structure of $\mathcal{S}_\mathcal{C}$. Recalling $V_\mathcal{D}$ from \eqref{eqn:vd} in the special set-up here coming from Example \ref{eg:key_operators}, we have the following seemingly surprising fact.
 \begin{lmm}\label{lem:vsc_in_sc}
    $V_{\mathcal{S}_\mathcal{C}} \subseteq \mathcal{S}_\mathcal{C}$. 
\end{lmm}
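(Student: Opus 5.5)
The plan is to use the alternative description of $V_\mathcal{D}$ from \eqref{eqn:vd_alt_helper}. In the setting of Example \ref{eg:key_operators} we have $A_0=\gradcirc$ and $A_0^\ast=-\div$, so
\[
V_{\mathcal{S}_\mathcal{C}}=\{(\op I-\gradcirc\,\div)^{-1}\phi:\phi\in\mathcal{S}_\mathcal{C}\}.
\]
The operator $\op I+\gradcirc(\gradcirc)^\ast$ is injective, since it is $\op I$ plus a non-negative operator. It therefore suffices to show the following: for every generator $u=\varphi_i^{(k)}e_i$ there is some $\psi\in\mathcal{S}_\mathcal{C}\cap D(\gradcirc\,\div)$ with $\psi-\gradcirc\,\div\,\psi=u$. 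Linearity then gives $V_{\mathcal{S}_\mathcal{C}}\subseteq\mathcal{S}_\mathcal{C}$. The case $k=0$ is trivial, because $u=0$ when $d\ge 2$.

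For $k\neq 0$, I introduce the scalar function $s_k(x):=\prod_{j=1}^d\sin(\pi k_jx_j)$ and the field $u_1:=\sum_{n=1}^d k_n\varphi_n^{(k)}e_n\in\mathcal{S}_\mathcal{C}$, the same field that appears in the proof of Proposition \ref{prop:sc_vs_eigenspaces}. Direct differentiation gives three identities:
\[
\div u=-\pi k_i s_k,\qquad \div u_1=-\pi|k|^2 s_k,\qquad \nabla s_k=\pi u_1 .
\]
The ansatz is $\psi:=u+\alpha u_1$. From the identities,
\[
-\gradcirc\div u=\pi^2k_iu_1,\qquad -\gradcirc\div u_1=\pi^2|k|^2u_1,
\]
so
\[
(\op I-\gradcirc\div)\psi=u+\bigl(\pi^2k_i+\alpha(1+\pi^2|k|^2)\bigr)u_1 .
\]
Choosing $\alpha:=-\pi^2k_i/(1+\pi^2|k|^2)$ makes the bracket vanish, so $(\op I-\gradcirc\div)\psi=u$. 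Since $\psi$ is a finite combination of elements of $\mathcal{S}_\mathcal{C}$, this finishes the argument.

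The main obstacle, which is minor, is justifying the operator-theoretic meaning of these formal computations.
\begin{itemize}
\item $u$ and $u_1$ lie in $D(\div)$ with the classical divergence. This holds because they are smooth up to the boundary, exactly as in Corollary \ref{cor:sc_in_domain_rotcircstarrotcirc_and_div}.
\item $s_k\in D(\gradcirc)=H^1_0(\Omega)$ with $\gradcirc s_k=\nabla s_k$. For this I would approximate each factor $\sin(\pi k_j\,\cdot)$ in $H^1(0,1)$ by $C_c^\infty(0,1)$ functions, as in the proof of Lemma \ref{lem:sc_in_domain_rotcirc}. The tensor products then converge to $s_k$ in $H^1(\Omega)$, and every factor here is a sine, so each approximant has compact support.
\end{itemize}
Together these show $\psi\in D(\gradcirc\,\div)$ and that the computed identities hold in $L^2(\Omega)^d$. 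This completes the plan.
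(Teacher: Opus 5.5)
Your proof is correct and is essentially the paper's argument. The paper applies $(A_0^\ast A_0+I)^{-1}$ to the Dirichlet eigenfunction $A_0^\ast u=\pi k_i s_k$ and then applies $\gradcirc$ to land in $\mathrm{span}\{u_1\}$; you instead use the equivalent description $V_{\mathcal{D}}=\{(A_0A_0^\ast+I)^{-1}\phi:\phi\in\mathcal{D}\}$ and verify the same element $u-\frac{\pi^2k_i}{1+\pi^2|k|^2}\,u_1$ directly via injectivity, with the identities $\div u=-\pi k_i s_k$ and $\nabla s_k=\pi u_1$ doing the same work (and you make explicit the domain facts $s_k\in H^1_0(\Omega)$, $u,u_1\in D(\div)$ that the paper leaves implicit).
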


\begin{proof}
    Fix $1 \leq i \leq d$ and $  k \in \N_0^d$ and consider the function $  u = \varphi_i^{(  k)}   e_i$. Set $\op A_0 := \gradcirc$. By \eqref{eqn:vd}, we would like to show that 
    \begin{align}\label{eqn:vsc_in_sc_claim}
        \op A_0 (\op A_0^\ast \op A_0 + \op I)^{-1} \op A_0^\ast   u \in \mathcal{S}_\mathcal{C}.
    \end{align}
    This follows from the fact that $\op A_0^\ast \op A_0 = -\div \gradcirc$ is the negative Dirichlet Laplacian on $\Omega$, whose eigenfunctions are of the form $\psi^{(\tilde{  k})} (  x) = \Pi_{j=1}^d \sin{(\pi \tilde{k}_j x_j)}$, $\tilde{  k} \in \N^d$.

    Indeed, we first see that $\op A_0^\ast   u = -\div\,   u = - \partial_i \varphi_i^{(  k)} = \pi k_i \psi^{(  k)}$. If $  k = (k_j)_j$ is such that some $k_j = 0$, then $\op A_0^\ast   u = 0$ and \eqref{eqn:vsc_in_sc_claim} is immediate. Thus, we may assume that $  k \in \N^d$. That is, $\op A_0^\ast   u$ is an eigenfunction of $\op A_0^\ast \op A_0$. This implies that $v = (\op A_0^\ast \op A_0 + \op I)^{-1} \op A_0^\ast   u$ is a constant multiple of $\psi^{(  k)}$. Combining this with the identity
    \begin{align*}
        \op A_0 \psi^{(  k)}
        = \gradcirc \psi^{(  k)}
        = \sum_{j=1}^d (\pi k_j) \varphi_j^{(  k)}   e_j,
    \end{align*}
    we conclude that $\op A_0 v \in \mathcal{S}_\mathcal{C}$. This gives \eqref{eqn:vsc_in_sc_claim}, as required. 
\end{proof}

By the abstract framework of Section \ref{sect:abstract_framework} and the properties of $\mathcal{S}_\mathcal{C}$ established in this section, we may now upgrade Proposition \ref{prop:sc_core_rotcirc} to obtain the following result:

\begin{cor}\label{cor:sc_core_rotcirc_and_gradcric}
    $\mathcal{S}_\mathcal{C}$ is a common core for $\Rotcirc$ and $-\div$.
\end{cor}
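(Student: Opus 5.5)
This follows by applying Proposition \ref{prop:vd_common_core} in the setting of Example \ref{eg:key_operators}, with $\op A_0 = \gradcirc$ and $\op A_1 = \Rotcirc$, so that $\op A_0^\ast = -\div$. We take $\mathcal{D} := \mathcal{S}_\mathcal{C}$.

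First, we check the hypotheses of Proposition \ref{prop:vd_common_core}. Corollary \ref{cor:sc_in_domain_rotcircstarrotcirc_and_div} gives $\mathcal{S}_\mathcal{C} \subseteq D(\Rotcirc)\cap D(\div) = D(\op A_1)\cap D(\op A_0^\ast)$. Proposition \ref{prop:sc_core_rotcirc} gives that $\mathcal{S}_\mathcal{C}$ is a core for $\Rotcirc = \op A_1$. The range condition \eqref{eqn:range_ker_condition} was verified in Example \ref{eg:key_operators}. Hence Proposition \ref{prop:vd_common_core} shows that $V_{\mathcal{S}_\mathcal{C}}$ is dense in $D(\Rotcirc)\cap D(\div)$ with respect to the graph inner product \eqref{eqn:graph_inner_product}.

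Next, we pass from $V_{\mathcal{S}_\mathcal{C}}$ to $\mathcal{S}_\mathcal{C}$. Lemma \ref{lem:vsc_in_sc} gives $V_{\mathcal{S}_\mathcal{C}} \subseteq \mathcal{S}_\mathcal{C}$, and by Corollary \ref{cor:sc_in_domain_rotcircstarrotcirc_and_div} we also have $\mathcal{S}_\mathcal{C}\subseteq D(\Rotcirc)\cap D(\div)$. Thus
\[
V_{\mathcal{S}_\mathcal{C}}\subseteq \mathcal{S}_\mathcal{C}\subseteq D(\Rotcirc)\cap D(\div).
\]
Since the smallest of these sets is already dense in the graph norm of $D(\Rotcirc)\cap D(\div)$, so is $\mathcal{S}_\mathcal{C}$. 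This is precisely the statement that $\mathcal{S}_\mathcal{C}$ is a common core for $\Rotcirc$ and $-\div$.

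There is no serious obstacle here, because the substantive work is contained in Lemma \ref{lem:vsc_in_sc} and Proposition \ref{prop:sc_core_rotcirc}. Two points need care. First, $V_{\mathcal{S}_\mathcal{C}}$ must be read as a linear subspace. This holds because the map $\op I - \op A_0(\op A_0^\ast\op A_0+\op I)^{-1}\op A_0^\ast$ is linear on $D(\op A_0^\ast)$. Second, the graph norm of $-\div$ coincides with that of $\div$, since $\|{-\div}\,u\| = \|\div u\|$.
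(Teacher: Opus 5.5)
Your proposal is correct and follows the paper's proof exactly: you apply Proposition \ref{prop:vd_common_core} with $\mathcal{D}=\mathcal{S}_\mathcal{C}$, using Corollary \ref{cor:sc_in_domain_rotcircstarrotcirc_and_div} and Proposition \ref{prop:sc_core_rotcirc} for its hypotheses, and then pass from $V_{\mathcal{S}_\mathcal{C}}$ to $\mathcal{S}_\mathcal{C}$ via Lemma \ref{lem:vsc_in_sc}. Your extra remarks, that $V_{\mathcal{S}_\mathcal{C}}$ is a linear subspace and that $-\div$ and $\div$ have the same graph norm, are harmless clarifications.
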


\begin{proof}
    Set $\op A_0 := \gradcirc$, $\op A_1 := \Rotcirc$, and $\hilbert H_1 := L^2(\Omega)^d$. We know from Corollary \ref{cor:sc_in_domain_rotcircstarrotcirc_and_div} and Proposition \ref{prop:sc_core_rotcirc} that $\mathcal{S}_{\mathcal{C}}\subseteq D(\Rotcirc)\cap D(\div) = D(\op A_1)\cap D(\op A_0^\ast)$ and that $\mathcal{S}_{\mathcal{C}}$ is a core for $\Rotcirc$. Hence, by Proposition \ref{prop:vd_common_core}, the set $V_{\mathcal{S}_{\mathcal{C}}}$ defined in \eqref{eqn:vd} with $\mathcal{D} = \mathcal{S}_{\mathcal{C}}$ is a common core for $\op A_1 = \Rotcirc$ and $\op A_0^\ast = -\div$. Since $V_{\mathcal{S}_\mathcal{C}} \subseteq \mathcal{S}_\mathcal{C}$ (Lemma \ref{lem:vsc_in_sc}), the conclusion follows.
\end{proof}

We are now in a position to prove Gaffney's equality, the main result of the section.

\begin{thrm}\label{thm:gaffney_cube}
    Let $\Omega :=(0,1)^d$ and $  u\in H_0(\Rot;\Omega)\cap H(\div;\Omega)$. Then, there holds $  u\in H^1(\Omega)^d$ and
    \begin{align}\label{Gaff}
        \|\Grad   u\|_{L^2(\Omega)^{d\times d}}^2 = \|\Rotcirc   u\|_{L^2(\Omega)^{d\times d}}^2 + \|\div\,   u\|_{L^2(\Omega)}^2.
    \end{align}
\end{thrm}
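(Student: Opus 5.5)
The plan is to prove \eqref{Gaff} first on the common core $\mathcal{S}_\mathcal{C}$ from Corollary \ref{cor:sc_core_rotcirc_and_gradcric}, and then extend it to all of $H_0(\Rot;\Omega)\cap H(\div;\Omega)$ by density. The identity gives an $H^1$-bound, and the weak closedness of $\Grad$ then delivers $u\in H^1(\Omega)^d$.

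\textbf{Step 1 (identity on $\mathcal{S}_\mathcal{C}$).} For $u\in\mathcal{S}_\mathcal{C}$, which is smooth up to the boundary, I start from the pointwise identity
\[
|\Grad u|^2 = |\skw \Grad u|^2 + \sum_{i,j}\partial_j u_i\,\partial_i u_j ,
\]
since $\tfrac12|V-V^\top|^2 = |V|^2 - \operatorname{tr}(V^2)$. It therefore suffices to show
\[
\sum_{i,j}\int_\Omega \partial_j u_i\,\partial_i u_j \,dx = \int_\Omega (\div u)^2\,dx .
\]
I would check this directly on the basis functions. The cleanest route is through Fourier orthogonality rather than integration by parts on a nonsmooth boundary.

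Write $u=\sum_i \sum_k c_{i,k}\varphi_i^{(k)}e_i$. Then $\partial_j\varphi_i^{(k)}$ for $j\neq i$ is $\pi k_j$ times a product carrying $\cos$ in the variables $x_i$ and $x_j$ and $\sin$ in the remaining ones. Also $\partial_i\varphi_j^{(k')}$ has the same trigonometric type, with coefficient $\pi k'_i$. Orthogonality of these product bases in $L^2(\Omega)$ forces $k=k'$ in the cross terms, so both sides reduce to $\frac{\pi^2}{2^d}\sum_k\bigl(\sum_i k_i c_{i,k}\bigr)^2$, up to the appropriate normalisation constants. Here the diagonal terms $i=j$ on the left match the corresponding terms in $(\div u)^2$.

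The degenerate cases where some $k_j=0$ need bookkeeping, since the normalisations $\|\cos(0\cdot)\|^2=1$ and $\|\sin(0\cdot)\|^2=0$ differ. I expect them to work out because the relevant products vanish identically. An alternative for this step is to integrate by parts twice on the smooth closed cube: the boundary terms vanish because the tangential components of $u$ vanish on each face, which is the Dirichlet-type ($\sin$) structure, and the faces are flat.

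\textbf{Step 2 (density).} Take $u\in H_0(\Rot;\Omega)\cap H(\div;\Omega)$. By Corollary \ref{cor:sc_core_rotcirc_and_gradcric} there are $u_n\in\mathcal{S}_\mathcal{C}$ with $u_n\to u$, $\Rotcirc u_n\to\Rotcirc u$ and $\div u_n\to\div u$ in $L^2$. Applying \eqref{Gaff} to the differences $u_n-u_m$ shows that $(\Grad u_n)_n$ is Cauchy in $L^2(\Omega)^{d\times d}$. Since $\Grad$ is closed (it is an adjoint), we get $u\in D(\Grad)=H^1(\Omega)^d$ and $\Grad u_n\to\Grad u$. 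Passing to the limit in \eqref{Gaff} for $u_n$ then gives the identity for $u$.

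\textbf{Main obstacle.} I expect the main difficulty to be the careful verification of the cross-term identity in Step 1, in particular handling the indices with $k_j=0$ and the normalisations. The density step is routine once Corollary \ref{cor:sc_core_rotcirc_and_gradcric} is in hand.
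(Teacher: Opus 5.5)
Your proposal is correct, and your Step 2 is exactly the paper's density argument through the common core of Corollary \ref{cor:sc_core_rotcirc_and_gradcric}.

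The difference lies in Step 1. The paper proves $\sum_{i,j}(\partial_j v_i,\partial_i v_j)_{L^2(\Omega)}=\|\div v\|_{L^2(\Omega)}^2$ on $\mathcal{S}_\mathcal{C}$ by integrating by parts twice. It then shows that the boundary term $\int_{\partial\Omega}\left(v\cdot\partial_n v-(v\cdot n)\div v\right)$ vanishes face by face, because $v|_F$ is parallel to the constant normal $n_F$. This is the alternative you mention at the end of your Step 1.

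Your main route instead diagonalises both quadratic forms in the trigonometric system, and the bookkeeping you were worried about does close. With $\psi^{(k)}:=\prod_l\sin(\pi k_l x_l)$ one has $\partial_i\varphi_i^{(k)}=-\pi k_i\psi^{(k)}$. For $i\neq j$, orthogonality forces $k=k'$ in the cross terms, so for every pair $(i,j)$
\[
\int_\Omega\partial_j u_i\,\partial_i u_j=\int_\Omega\partial_i u_i\,\partial_j u_j=\frac{\pi^2}{2^d}\sum_{k\in\N^d}k_ik_j\,c_{i,k}c_{j,k}.
\]
Indices with some $k_l=0$, $l\neq i$, drop out because then $\varphi_i^{(k)}\equiv 0$. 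If $k_i=0$, the factor $k_i$ kills every contribution, in the cross terms once orthogonality has forced $k'=k$.

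What your route buys is that it uses nothing but the orthogonality relations of $\sin$ and $\cos$ on $(0,1)$. It needs no traces, normals or divergence theorem, so it matches the paper's advertised reliance on the Fourier bases even more closely. It is, however, tied to the specific basis.

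The paper's integration-by-parts argument is basis-free. It gives the identity for every field smooth up to the boundary of the cube with vanishing tangential trace. It also shows transparently that the flatness of the faces is what kills the boundary term; on curved boundaries a curvature term would remain.
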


\begin{proof}
    \textbf{Step 1.}    We show that the equality \eqref{Gaff} holds for all functions in $\mathcal{S}_{\mathcal{C}}$. 
    
    To this end, let $  v\in \mathcal{S}_{\mathcal{C}}$ be fixed. We begin by observing that
    \begin{align*}
        &\|\Grad  v\|_{L^2(\Omega)^{d\times d}}^2 \\
        &\quad= \frac{1}{2}\|\Grad  v- (\Grad  v)^\top\|_{L^2(\Omega)^{d\times d}}^2 + (\Grad  v,(\Grad  v)^\top)_{L^2(\Omega)^{d\times d}}  \\
        &\quad= \|\Rotcirc   v\|_{L^2(\Omega)^{d\times d}}^2 + (\Grad  v,(\Grad  v)^\top)_{L^2(\Omega)^{d\times d}} ,
    \end{align*}
    and it only remains to show that $\|\div\,   v\|_{L^2(\Omega)}^2 = (\Grad  v,(\Grad  v)^\top)_{L^2(\Omega)^{d\times d}}$. Integrating by parts twice yields
    \begin{align*}
        (\Grad  v, (\Grad  v)^\top)_{L^2(\Omega)^{d\times d}} &= \sum_{i,j = 1}^d (\partial_j v_i,\partial_i v_j)_{L^2(\Omega)}\\ &= \sum_{i,j = 1}^d (\partial_i v_i,\partial_{j} v_j)_{L^2(\Omega)} + b(  v) \\ &= \|\div\,  v\|_{L^2(\Omega)}^2 + b(  v),
    \end{align*}
    where the term $b(  v)$ is given by
    \begin{align*}
        b(  v):= \sum_{i,j = 1}^d\int_{\partial\Omega}\left( v_i (\partial_i v_j) n_j - v_i (\partial_j v_j) n_i\right) = \int_{\partial\Omega} \left(  v\cdot \partial_{  n}   v - (  v\cdot   n)\div\,   v\right)
    \end{align*}
    with $\partial_{  n}   v := (\Grad  v)^\top   n$. Let $F\subseteq \partial\Omega$ be a boundary face of $\Omega$. Since $ v$ has vanishing tangential trace on $\partial\Omega$, we have $\left.  v\right\rvert_F =   v_{F}^\perp   n_F$ with $  v_{F}^\perp := \left.   v\right\rvert_F\cdot   n_F$. Moreover, noting that $  n_F := \left.  n\right\rvert_F$ is constant, we obtain
    \begin{align*}
        \left.\left(  v\cdot \partial_{  n}   v\right)\right\rvert_F =   v_{F}^\perp   n_F\cdot(\Grad  (  v_{F}^\perp   n_F)^\top  n_F) &=   v_{F}^\perp   n_F\cdot \grad   v_{F}^\perp \\ &=   v_{F}^\perp \,\div (  v_{F}^\perp   n_F) = \left.\left((  v\cdot   n)\div\,   v\right)\right\rvert_F.
    \end{align*}
    Since $F$ was an arbitrary boundary face of $\Omega$, we deduce that $b(  v) = 0$.

    \textbf{Step 2.} Let $  u\in H_0(\Rot;\Omega)\cap H(\div;\Omega) = D(\Rotcirc)\cap D(\div)$. By Corollary \ref{cor:sc_core_rotcirc_and_gradcric}, there exists a sequence $(  u_k)_k$ in $ \mathcal{S}_{\mathcal{C}}$ such that 
    \begin{align*}
        \lim_{k\rightarrow \infty} \|  u_k -   u\|_{L^2(\Omega)^d} =\lim_{k\rightarrow \infty}\|\div (  u_k -   u)\|_{L^2(\Omega)} = \lim_{k\rightarrow \infty}\|\Rotcirc (  u_k -   u)\|_{L^2(\Omega)^{d\times d}} = 0.
    \end{align*}
    By Step 1, for all $k\in \N$, we have 
    \begin{align}\label{Gaff k}
        \|\Grad   u_k\|_{L^2(\Omega)^{d\times d}}^2 = \|\Rotcirc   u_k\|_{L^2(\Omega)^{d\times d}}^2 + \|\div\,   u_k\|_{L^2(\Omega)}^2.
    \end{align}
    In particular, $(\Grad  u_k)_k$ is a Cauchy sequence in $L^2(\Omega)^{d\times d}$. We deduce that $  u\in D(\Grad) =  H^1(\Omega)^d$ and $\|  u_k -   u\|_{H^1(\Omega)^d}\rightarrow 0$. Passing to the limit in \eqref{Gaff k}, we find \eqref{Gaff}. 
\end{proof}

Lest the reader think that it is possible to generalize Theorem \ref{thm:gaffney_cube} to arbitrary Lipschitz domains, we point out the following counterexample to Gaffney's (in)equality:

\begin{rmrk}
    Consider the two-dimensional L-shaped domain $\Omega = (0,2)^2 \setminus [1,2]^2$. Fix a right-hand side $f \in L^2(\Omega)$ for which the weak solution $u := (-\div \gradcirc)^{-1} f \in H^1_0(\Omega)$ to the Dirichlet problem
    \begin{align*}
    \begin{cases}
        -\Delta u = f& \text{in }\Omega,\\
        \hfill u = 0& \text{on }\partial\Omega
    \end{cases}
    \end{align*}
    satisfies $u\notin H^2(\Omega)$. Then, $  v := \gradcirc u\notin H^1(\Omega)^2$, but
    \begin{itemize}
        \item $ v \in H(\div;\Omega)$ with $-\div\,   v = f$, and

        \item $  v \in H_0(\Rot;\Omega)$ with $\Rotcirc   v = 0$ since
        \begin{align*}
            \langle   v, (\Rotcirc)^\ast A\rangle_{L^2(\Omega)^2} = \langle \gradcirc u, -\Div (\skw A)\rangle_{L^2(\Omega)^2} = \langle u, D^2:(\skw A)\rangle_{L^2(\Omega)} = 0
        \end{align*}
        for any $A\in C^\infty(\Omega)^{2\times 2}$.
    \end{itemize}
    In particular, \eqref{Gaff} does not hold, even after replacing the equality by `$\lesssim$'.
\end{rmrk}

Let us briefly comment on how to obtain an analogue of Theorem \ref{thm:gaffney_cube} for the space $H(\Rot;\Omega)\cap H_0(\div;\Omega)$.
\begin{rmrk}\label{rmk:gaffney_with_divzero_cs}
   Let $\Omega :=(0,1)^d$ and $\mathcal{C}_{\mathcal{S}} :=
    \Span{\{ \psi_i^{(  k)}   e_i: i\in\{1,\ldots,d\},\;   k\in\N_0^d \}}$, where 
\begin{align*}
    \psi_i^{(  k)}:\Omega\rightarrow \R,\qquad \psi_i^{(  k)}(x_1,\ldots,x_d) := \sin(\pi k_i x_i) \prod_{\substack{1\leq j \leq d \\ j\neq i}} \cos(\pi k_j x_j)
\end{align*}
for $i\in\{1,\ldots,d\}$ and $  k = (k_1,\ldots,k_d)^\top\in \N_0^d$. Similar arguments to those presented in this section, but with $A_0 = \grad$ and $A_1 = \Rot$, lead to $\mathcal{C}_{\mathcal{S}}$ being a common core for $\Rot$ and $-\divcirc$, and to the following analogue of Theorem \ref{thm:gaffney_cube}: there holds $H(\Rot;\Omega)\cap H_0(\div;\Omega) \subseteq H^1(\Omega)^d$ and
    \begin{align*}
        \|\Grad   u\|_{L^2(\Omega)^{d\times d}}^2 = \|\Rot   u\|_{L^2(\Omega)^{d\times d}}^2 +  \|\divcirc   u\|_{L^2(\Omega)}^2
    \end{align*}
for all $u\in H(\Rot;\Omega)\cap H_0(\div;\Omega)$.
\end{rmrk}

Finally, we shall fill the last gap in the reasoning of the proof of Picard--Weber--Weck's selection theorem by showing the chain rule formulas for $\div$ and $\Rotcirc$ in the next section. Similar formulas can be established for $\divcirc$ and $\Rot$, using  Remark \ref{rmk:gaffney_with_divzero_cs}, to also obtain the compactness statement with the other set of boundary conditions.

\section{\texorpdfstring{Proof of Proposition \ref{prop:rot_div_comp}}{Proof of compatibility formulae}}\label{sec:compatibility_proof}

We begin by fixing notation for this section. Let $\Omega \subseteq \R^d$ be a global Lipschitz domain with admissible map $\Phi:Q\rightarrow \Omega$, $Q = (0,1)^d$. Recall from Definition \ref{defn:global_lipschitz} that this implies $\Phi \in C^{0,1}_{b}(\overline{Q}, \overline{\Omega})$, $\Phi^{-1} \in C^{0,1}_{b}(\overline{\Omega}, \overline{Q})$, and there exists $c>0$ such that
\begin{align*}
    \det{J_\Phi(x)} \geq c \quad \text{for $a.e.$ $x \in Q$}.
\end{align*}
To differentiate between functions on $\Omega$ and $Q$, we introduce the following abbreviation.

\begin{notation}
    For $u:\Omega \rightarrow \R^n$, $n \in \N$, set $\widetilde{u} := u \circ \Phi$.
\end{notation}
\begin{rmrk}
    We record some preliminary facts that will be used throughout this section.
    \begin{itemize}
        \item By Rademacher's theorem, $\Phi$ is differentiable almost everywhere in $Q$ and $\Phi \in W^{1,\infty}(Q)^d$.
    
        \item Since $\Phi$ is a homeomorphism, it takes interior points to interior points, and boundary points to boundary points, hence $u \in C_c^{0,1}(\Omega)$ implies $\widetilde{u} \in C_c^{0,1}(Q)$.
    
        \item If $  u \in L^2(\Omega)^d$, then $\widetilde{u} \in L^2(Q)^d$ by a change of variables and $\det{J_\Phi} \in L^\infty(Q)$.
    
        \item We have that $\adjugate J_\Phi = (\det{J_\Phi}) J_\Phi^{-1} \in L^\infty(Q)^{d\times d}$.
    \end{itemize}
\end{rmrk}

\begin{prop}\label{prop:grad_composition}
    The following assertions are true.
    \begin{itemize}
        \item[(i)] If $u \in H^1(\Omega)$, then $\widetilde{u} \in H^1(Q)$ and $\grad_Q \widetilde{u} = J_\Phi^\top \widetilde{\grad_\Omega u}$.
        \item[(ii)] If $u \in H_0^1(\Omega)$, then $\widetilde{u} \in H_0^1(Q)$ and $\gradcirc_Q \widetilde{u} = J_\Phi^\top \widetilde{\gradcirc_\Omega u}$.
        \item[(iii)] If $  u\in H^1(\Omega)^d$, then $\widetilde{  u} \in H^1(Q)^d$ and $\Grad_Q \widetilde{  u} =  \widetilde{\Grad_\Omega   u}\, J_\Phi$.
        \item[(iv)] If $  u\in H^1_0(\Omega)^d$, then $\widetilde{  u} \in H_0^1(Q)^d$ and $\Gradcirc_Q \widetilde{  u} =  \widetilde{\Gradcirc_\Omega   u}\, J_\Phi$.
    \end{itemize}
\end{prop}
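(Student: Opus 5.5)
The plan is to prove the four assertions in the order (i), (ii), (iii), (iv). Items (iii) and (iv) follow componentwise from (i) and (ii): the $i$-th row of $\Grad_Q\widetilde{u}$ is $(\grad_Q \widetilde{u_i})^\top = (\widetilde{\grad_\Omega u_i})^\top J_\Phi$, which is exactly the $i$-th row of $\widetilde{\Grad_\Omega u}\,J_\Phi$. So the work is concentrated in the scalar statements (i) and (ii).

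\textbf{Smooth case.} First treat $u\in C^\infty(\Omega)\cap H^1(\Omega)$, or, for (ii), $u\in C_c^\infty(\Omega)$. Here $\widetilde u$ is locally Lipschitz on $Q$, and the chain rule $\grad_Q\widetilde u = J_\Phi^\top\widetilde{\grad_\Omega u}$ holds a.e. by Rademacher's theorem, since $\Phi$ is differentiable a.e. and $u$ is $C^1$. Because locally Lipschitz functions are weakly differentiable with weak gradient equal to the a.e. gradient, this identity holds in the distributional sense on $Q$. For $u\in C_c^\infty(\Omega)$, the preliminary remarks give $\widetilde u\in C_c^{0,1}(Q)\subseteq H^1_0(Q)$; the last inclusion follows by mollification.

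\textbf{Norm bound.} By the change of variables formula for bi-Lipschitz maps (area formula), together with $\det J_\Phi\ge c$ and $J_\Phi\in L^\infty$,
\[
\|\widetilde u\|_{L^2(Q)}^2=\int_\Omega |u|^2\,|\det J_{\Phi^{-1}}|\le C\|u\|_{L^2(\Omega)}^2,
\]
and similarly
\[
\|J_\Phi^\top\widetilde{\grad u}\|_{L^2(Q)^d}\le \|J_\Phi\|_\infty\, C^{1/2}\|\grad u\|_{L^2(\Omega)^d}.
\]
Hence $u\mapsto\widetilde u$ is bounded from $H^1(\Omega)$ to $H^1(Q)$ on the dense subspace considered in the smooth case.

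\textbf{Density.} For (ii), $C_c^\infty(\Omega)$ is dense in $H_0^1(\Omega)=D(\gradcirc)$ by definition. The map $u\mapsto \widetilde u$ is bounded $L^2\to L^2$, and $\grad u\mapsto J_\Phi^\top\widetilde{\grad u}$ is bounded $L^2\to L^2$. Therefore, for $u_n\to u$ in $H^1$, the sequence $\widetilde{u_n}$ is Cauchy in $H^1_0(Q)$, and by closedness of $\gradcirc_Q$ the limit $\widetilde u$ lies in $H^1_0(Q)$ with the stated formula. For (i), use Meyers--Serrin density of $C^\infty(\Omega)\cap H^1(\Omega)$ in $H^1(\Omega)=D(\grad)$.

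The one subtlety is that the paper defines $H^1(\Omega)$ as $D(-(\divcirc)^*)$, which is the distributional definition, so Meyers--Serrin applies. If I prefer to avoid it, I can argue directly. Fix $\eta\in C_c^\infty(Q)$ and $u\in H^1(\Omega)$. Then $\eta\circ\Phi^{-1}$ is compactly supported and Lipschitz in $\Omega$. Mollifying $u$ locally on the support and passing to the limit gives $\int_Q \widetilde u\,\partial_j\eta=-\int_Q (J_\Phi^\top\widetilde{\grad u})_j\eta$.

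\textbf{Main obstacle.} I expect the hardest step to be justifying the a.e. chain rule and the change of variables for a merely bi-Lipschitz $\Phi$ composed with a smooth $u$. I will handle this via Rademacher's theorem and the area formula, using the lower bound $\det J_\Phi\ge c$ to control $|\det J_{\Phi^{-1}}|=1/(\det J_\Phi\circ\Phi^{-1})$.
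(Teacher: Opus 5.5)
Your proof is correct. The paper gives no argument of its own here and simply cites \cite[Theorem A.1]{PaulyWaurick2026}, so what you supply is a self-contained replacement. Its ingredients are:
\begin{itemize}
\item the pointwise chain rule for a $C^1$ function composed with $\Phi$ at points of differentiability of $\Phi$ (Rademacher);
\item identification of the a.e.\ gradient of a locally Lipschitz function with its weak gradient;
\item $L^2$-boundedness of $f\mapsto f\circ\Phi$ via the area formula;
\item density combined with closedness of $\gradcirc_Q$ for (ii), where $C_c^\infty(\Omega)$ is a core by definition and $C_c^{0,1}(Q)\subseteq H_0^1(Q)$ by mollification;
\item density combined with closedness of $\grad_Q$ for (i), via Meyers--Serrin, which is legitimate because $D(\grad)=D(-(\divcirc)^\ast)$ is exactly the distributional $H^1$.
\end{itemize}
Items (iii) and (iv) then follow row by row.

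The citation buys brevity. Your route fits the paper's stated aim of independence, at the cost of using Rademacher, the area formula and Meyers--Serrin as black boxes; the first two are used elsewhere in the paper anyway.

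Two small points you could make explicit. First, $\widetilde f$ is well defined on $L^2$-equivalence classes because the Lipschitz map $\Phi^{-1}$ sends null sets to null sets. Second, the hypothesis $\det J_\Phi\geq c$ is not needed for your $L^2$ bound, since $|\det J_{\Phi^{-1}}|\leq \mathrm{Lip}(\Phi^{-1})^d$ a.e.\ directly. The statement therefore holds for any bi-Lipschitz homeomorphism $\Phi\colon Q\to\Omega$.
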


\begin{proof}
    See \cite[Theorem A.1]{PaulyWaurick2026}.
\end{proof}

\begin{prop}\label{prop:rot_composition}
    Let $  u \in H_0(\Rot;\Omega)$. Then, 
    $J_\Phi^\top \widetilde{  u} \in H_0(\Rot;Q)$ and
    \begin{equation*}
        \Rotcirc_Q{(J_\Phi^\top \widetilde{  u})} 
        =  J_\Phi^\top\,\widetilde{\Rotcirc_\Omega   u}\,J_\Phi =: ( J_\Phi^\top \otimes J_\Phi^\top )\widetilde{\Rotcirc_\Omega   u}.
    \end{equation*}
\end{prop}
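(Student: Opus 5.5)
We argue by density: first for smooth compactly supported fields, then pass to the limit. Let $u\in H_0(\Rot;\Omega)=D(\Rotcirc_\Omega)$. By definition of $\Rotcirc_\Omega$ as the closure of $\Rot_c$, there is a sequence $(u_n)_n$ in $C_c^\infty(\Omega)^d$ with $u_n\to u$ in $L^2(\Omega)^d$ and $\Rot u_n\to \Rotcirc_\Omega u$ in $L^2(\Omega)^{d\times d}$.

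\textbf{Step 1: the identity for smooth fields.} Fix $n$. Since $u_n\in H_0^1(\Omega)^d$, Proposition \ref{prop:grad_composition}(iv) gives $\widetilde{u}_n\in H_0^1(Q)^d$ with $\Grad_Q\widetilde{u}_n=\widetilde{\Grad u_n}\,J_\Phi$. The field $w_n:=J_\Phi^\top\widetilde{u}_n$ is the pullback of the $1$-form associated with $u_n$. Its Jacobian formally equals
\[
(\partial_j (w_n)_i)_{ij}=\bigl(\textstyle\sum_k \partial_j(\partial_i\Phi_k)\,(\widetilde{u}_n)_k\bigr)_{ij}+J_\Phi^\top\widetilde{\Grad u_n}\,J_\Phi .
\]
The first term is symmetric in $i,j$ (second derivatives of $\Phi$), so it is annihilated by $\skw$. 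This yields $\skw\Grad w_n=J_\Phi^\top\,\widetilde{\Rot u_n}\,J_\Phi$.

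Since $\Phi$ is only Lipschitz, second derivatives of $\Phi$ are not available, and I will not differentiate $J_\Phi$ directly. Instead I will verify the identity weakly. For a test field $A\in C_c^\infty(Q)^{d\times d}$, I must show
\[
\langle w_n,-\Div\skw A\rangle_{L^2(Q)^d}=\langle J_\Phi^\top\widetilde{\Rot u_n}J_\Phi,A\rangle_{L^2(Q)^{d\times d}} .
\]
To do this, I approximate $\Phi$ by mollifications $\Phi_\varepsilon$, defined on a neighbourhood of $\operatorname{supp}A$, which is compactly contained in $Q$. Then $J_{\Phi_\varepsilon}\to J_\Phi$ a.e., boundedly. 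For $\Phi_\varepsilon$ smooth, the computation above is classical: $\partial_j\partial_i\Phi_{\varepsilon,k}$ is symmetric and $u_n\circ\Phi$ is Lipschitz. Passing $\varepsilon\to0$ by dominated convergence gives the weak identity. Alternatively, one can integrate by parts once on $\partial_j\big((\partial_i\Phi_k)(\widetilde{u}_n)_k\big)$ using $\widetilde u_n\in W^{1,\infty}$ and the distributional symmetry $\partial_j\partial_i\Phi_k=\partial_i\partial_j\Phi_k$.

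\textbf{Step 2: $w_n\in H_0(\Rot;Q)$.} The field $w_n=J_\Phi^\top\widetilde{u}_n$ lies in $L^\infty$ with compact support in $Q$, because $\Phi$ maps boundary to boundary and $\operatorname{supp}u_n$ is compact in $\Omega$. By Step 1, $w_n\in D(\Rot_Q)$. Mollifying $w_n$ keeps the support compact in $Q$ and converges in the $H(\Rot;Q)$ graph norm, since mollification commutes with $\skw\Grad$ distributionally. Hence $w_n\in D(\Rotcirc_Q)$, and
\[
\Rotcirc_Q w_n=(J_\Phi^\top\otimes J_\Phi^\top)\widetilde{\Rotcirc_\Omega u_n}.
\]

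\textbf{Step 3: limit.} Composition with $\Phi$ is bounded $L^2(\Omega)\to L^2(Q)$, since $\int_Q|f\circ\Phi|^2=\int_\Omega |f|^2|\det J_{\Phi^{-1}}|$ and $\Phi^{-1}$ is Lipschitz. Since $J_\Phi\in L^\infty$, both $u\mapsto J_\Phi^\top\widetilde u$ and $V\mapsto J_\Phi^\top\widetilde VJ_\Phi$ are bounded on $L^2$. Therefore $w_n\to J_\Phi^\top\widetilde u$ in $L^2(Q)^d$ and
\[
\Rotcirc_Q w_n\to J_\Phi^\top\widetilde{\Rotcirc_\Omega u}\,J_\Phi
\]
in $L^2(Q)^{d\times d}$. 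Closedness of $\Rotcirc_Q$ gives the claim.

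\textbf{Main obstacle.} The main obstacle is Step 1, namely justifying the cancellation of the second-derivative terms of the merely Lipschitz map $\Phi$. I would handle it via the mollification of $\Phi$ on compact subsets of $Q$, as described there.
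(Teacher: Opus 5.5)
Your proof is correct and follows the paper's route. You establish a weak identity for compactly supported regular fields, in which the symmetric second-derivative terms of $\Phi$ cancel against the skew-symmetric test field; your ``alternative'' integration by parts is exactly the identity the paper invokes. You then conclude by density of $C_c^\infty(\Omega)^d$ in $H_0(\Rot;\Omega)$ and closedness of $\Rotcirc_Q$. Your mollification of $\Phi$, and of the compactly supported $w_n$, only supplies details the paper leaves implicit: the justification of that integration-by-parts identity, and the passage from the weak identity to membership in $D(\Rotcirc_Q)$ rather than merely $D(\Rot_Q)$.
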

\begin{proof}
    Set $J:=J_\Phi =  \Grad(\Phi)$. \textbf{Step 1.} We show that for any $  v\in C^{0,1}_c(\Omega)^d$ we have that $J^\top\widetilde{  v}\in H_0(\Rot;Q)$ and $\Rotcirc{(J^\top \widetilde{  v})} = ( J^\top \otimes J^\top )\widetilde{\Rotcirc   v}$.
    
    To this end, let $  v\in C^{0,1}_c(\Omega)^d$. For any $A\in C^\infty(Q)^{d\times d}$, we have that
    \begin{align*}
        \langle J^\top  \widetilde{  v}, (\Rotcirc_Q)^\ast A\rangle_{L^2(Q)^d} 
        &= \langle J^\top  \widetilde{  v}, -\Div_Q (\skw A)\rangle_{L^2(Q)^d}  \\
        &= -\langle\widetilde{  v}, J \Div_Q (\skw A)\rangle_{L^2(Q)^d} \\
        &= \langle\Gradcirc_Q(\widetilde{  v}), J \skw A\rangle_{L^2(Q)^{d\times d}} \\
        &= \langle J^\top \Gradcirc_Q(\widetilde{  v}), \skw A\rangle_{L^2(Q)^{d\times d}} \\
        &= \langle J^\top\widetilde{\Gradcirc_\Omega(  v)} J,\skw A\rangle_{L^2(Q)^{d\times d}} \\
        &= \langle\skw(J^\top\widetilde{\Gradcirc_\Omega(  v)} J), A\rangle_{L^2(Q)^{d\times d}}
        = \langle J^\top\,\widetilde{\Rotcirc_\Omega   v}\, J,A\rangle_{L^2(Q)^{d\times d}},
    \end{align*}
    where we have used in the third equality the following integration by parts identity:
    \begin{align*}
        \int_Q \Gradcirc(\widetilde{  v}) : \left(  \Grad(\Phi)\, B\right) = -\int_Q\widetilde{  v}\cdot \left(\Grad(\Phi)\, \Div (B)\right),
    \end{align*}
    for all $B\in C^\infty(Q;\R^{d\times d}_{\mathrm{skw}})$,
    where $\R^{d\times d}_{\mathrm{skw}}:= \{M\in \R^{d\times d}:M^\top = -M\}$. Since $J^\top\,\widetilde{\Rotcirc_\Omega   v}\, J\in L^2(Q)^{d\times d}$, we deduce from the above calculation that $J^\top  \widetilde{  v}\in D(\Rotcirc_Q) = H_0(\Rot;Q)$ and
    \begin{align*}
        \Rotcirc(J^\top  \widetilde{  v}) = J^\top\,\widetilde{\Rotcirc   v}\, J = (J^\top \otimes J^\top)\widetilde{\Rotcirc   v}.
    \end{align*}

    \textbf{Step 2.} Let $  u\in H_0(\Rot;\Omega
    )$. There exists a sequence $(  u_n)_n\subseteq C_c^\infty(\Omega)^d$ such that $\|  u -   u_n\|_{H(\Rot;\Omega)} \rightarrow 0$ as $n\rightarrow\infty$. First, we observe that this implies that $\|\widetilde{  u} - \widetilde{  u}_n\|_{L^2(Q)^d}\rightarrow 0$ and $\|\widetilde{\Rotcirc_\Omega   u} - \widetilde{\Rotcirc_\Omega   u_n}\|_{L^2(Q)^{d\times d}}\rightarrow 0$ as $n\rightarrow\infty$. Set 
    \begin{align*}
          w_n := J^\top \widetilde{  u}_n,
    \end{align*}
    and note that $  w_n \in H_0(\Rot;Q)$ by Step 1 with $\Rotcirc(  w_n) = (J^\top \otimes J^\top)\widetilde{\Rotcirc   u_n}$. Then, $  w_n \rightarrow J^\top \widetilde{  u}$ in $L^2(Q)^d$ and 
    \begin{align*}
        \Rotcirc_Q(  w_n)= (J^\top \otimes J^\top)\widetilde{\Rotcirc_\Omega   u_n}\rightarrow  (J^\top \otimes J^\top)\widetilde{\Rotcirc_\Omega   u} \quad\text{in }L^2(Q)^{d\times d}
    \end{align*}
    as $n\rightarrow\infty$. Noting that $(  w_n)_n$ is Cauchy in $H(\Rot;Q)$ and that $H_0(\Rot;Q)$ is closed in $H(\Rot;Q)$, we obtain that $J^\top \widetilde{  u} \in H_0(\Rot;Q)$ and $\Rotcirc_Q(J^\top \widetilde{  u}) = (J^\top \otimes J^\top)\widetilde{\Rotcirc_\Omega   u}$.
\end{proof}

\begin{prop}\label{prop:div_composition}
    Let $  u \in H(\div;\Omega)$. Then $(\adjugate J_\Phi) \widetilde{  u} \in H(\div;Q)$ and
    \begin{align}\label{eqn:div_composition}
        \div_Q{( (\adjugate J_\Phi) \widetilde{  u})} 
        = (\det{J_\Phi})\widetilde{\div_\Omega   u}.
    \end{align}
\end{prop}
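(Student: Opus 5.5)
Since $\div_\Omega = -(\gradcirc_\Omega)^\ast$, I will prove the claim by duality: test $(\adjugate J_\Phi)\widetilde{u}$ against $\gradcirc_Q w$ for $w\in C_c^\infty(Q)$, and transport everything back to $\Omega$ with the change of variables $y=\Phi(x)$, $dy = (\det J_\Phi)\,dx$. By the definition of the adjoint, it suffices to show
\begin{align*}
    \langle (\adjugate J_\Phi)\widetilde{u}, \gradcirc_Q w\rangle_{L^2(Q)^d} = -\langle (\det J_\Phi)\widetilde{\div_\Omega u}, w\rangle_{L^2(Q)}
\end{align*}
for all $w\in C_c^\infty(Q)$. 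The left-hand side is an $L^2(Q)^d$ pairing because $\adjugate J_\Phi\in L^\infty(Q)^{d\times d}$ and $\widetilde{u}\in L^2(Q)^d$. The right-hand side is an $L^2(Q)$ pairing because $\det J_\Phi\in L^\infty(Q)$ and $\widetilde{\div_\Omega u}\in L^2(Q)$. The identity then gives $(\adjugate J_\Phi)\widetilde u\in D(-(\gradcirc_Q)^\ast) = D(\div_Q)$ together with \eqref{eqn:div_composition}.

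Given $w\in C_c^\infty(Q)$, set $v := w\circ\Phi^{-1}$. Since $\Phi^{-1}$ is a bi-Lipschitz homeomorphism, $v\in C_c^{0,1}(\Omega)\subseteq H_0^1(\Omega)$. Applying Proposition \ref{prop:grad_composition}(ii) with $\Phi$ (so $\widetilde v = w$) gives $\gradcirc_Q w = J_\Phi^\top \widetilde{\gradcirc_\Omega v}$. Since $\adjugate J_\Phi = (\det J_\Phi)J_\Phi^{-1}$, the integrand on the left becomes
\begin{align*}
    (\adjugate J_\Phi)\widetilde u\cdot J_\Phi^\top \widetilde{\gradcirc_\Omega v} = (\det J_\Phi)\, J_\Phi^{-\top}\cdots
\end{align*}
More precisely, $\big((\det J_\Phi)J_\Phi^{-1}\widetilde u\big)\cdot \big(J_\Phi^\top \widetilde{\gradcirc v}\big) = (\det J_\Phi)\,\widetilde u\cdot\widetilde{\gradcirc v}$, because $J_\Phi^{-1}$ and $J_\Phi^{\top}$ cancel when the transpose is moved across the dot product. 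The change of variables then gives
\begin{align*}
    \langle (\adjugate J_\Phi)\widetilde{u}, \gradcirc_Q w\rangle_{L^2(Q)^d} = \int_Q (\det J_\Phi)\,\widetilde{u\cdot \gradcirc_\Omega v} = \langle u,\gradcirc_\Omega v\rangle_{L^2(\Omega)^d} = -\langle \div_\Omega u, v\rangle_{L^2(\Omega)}.
\end{align*}
The last equality uses $v\in D(\gradcirc_\Omega)$ and $\div_\Omega = -(\gradcirc_\Omega)^\ast$. Changing variables back gives $-\int_Q (\det J_\Phi)\widetilde{\div_\Omega u}\,w$, which is the required identity.

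The main obstacle I expect is justifying the change of variables formula $\int_\Omega f = \int_Q \widetilde f\,\det J_\Phi$ for $f\in L^1(\Omega)$ when $\Phi$ is only bi-Lipschitz. I would invoke the area formula for Lipschitz maps, using injectivity of $\Phi$ and $\det J_\Phi\ge c>0$ a.e. to drop the absolute value. I would also note that the pointwise a.e. identities $J_{\Phi^{-1}}\circ\Phi = J_\Phi^{-1}$ and $\adjugate J_\Phi = (\det J_\Phi)J_\Phi^{-1}$ hold a.e. in $Q$; these are implicitly contained in Proposition \ref{prop:grad_composition}, which I take as given. Everything else is algebra, and no density argument is needed, unlike in Proposition \ref{prop:rot_composition}, because the test functions are transported directly.
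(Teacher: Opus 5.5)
Your proof is correct and follows the paper's argument: change of variables, cancellation of $\adjugate J_\Phi=(\det J_\Phi)J_\Phi^{-1}$ against $J_\Phi^\top$, and Proposition~\ref{prop:grad_composition}(ii). The only difference is that you parametrize test functions from the cube side ($w\in C_c^\infty(Q)$, $v=w\circ\Phi^{-1}$) rather than from $\Omega$ ($\widetilde v$ with $v\in C_c^{0,1}(\Omega)$), which makes explicit that you are testing against the defining core of $\gradcirc_Q$; just delete the unfinished display containing $J_\Phi^{-\top}\cdots$.
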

\begin{proof}
    Set $J:=J_\Phi =  \Grad(\Phi)$ and $j:=\det(J)$.
    
    Let $  u \in H(\div;\Omega)$ and $v \in C_c^{0,1}(\Omega) \subseteq H_0^1(\Omega) = D(\gradcirc_\Omega)$. By a change of variables, we have
    \begin{align}
        \langle -\div_\Omega   u, v \rangle_{L^2(\Omega)}
        = \langle j(\widetilde{-\div_\Omega   u}), \widetilde{v} \rangle_{L^2(Q)}. \label{eqn:div_composition_calc1}
    \end{align}
    On the other hand, we may also compute
    \begin{alignat}{3}
        \langle -\div_\Omega   u, v \rangle_{L^2(\Omega)}
        &= \langle   u, \gradcirc_\Omega v \rangle_{L^2(\Omega)^d} \nonumber\\
        &= \langle j \widetilde{  u}, \widetilde{\gradcirc_\Omega v} \rangle_{L^2(Q)^d} \qquad
        &&\text{(by a change of variables)} \nonumber\\
        &= \langle j J^{-1} \widetilde{  u}, J^\top\widetilde{\gradcirc_\Omega v} \rangle_{L^2(Q)^d}\nonumber\\
        &= \langle (\adjugate J) \widetilde{  u}, \gradcirc_Q \widetilde{v} \rangle_{L^2(Q)^d} \quad
        &&\text{(by Proposition \ref{prop:grad_composition}).} \label{eqn:div_composition_calc2}
    \end{alignat}
    Since $v \in C_c^{0,1}(\Omega)$ is arbitrary, the right-hand sides of \eqref{eqn:div_composition_calc1} and \eqref{eqn:div_composition_calc2} show that $(\adjugate J_\Phi) \widetilde{  u} \in D((\gradcirc_Q)^\ast) = D(\div_Q) = H(\div;Q)$, and \eqref{eqn:div_composition} holds.
\end{proof}

The combination of Propositions \ref{prop:rot_composition} and \ref{prop:div_composition} yields Proposition \ref{prop:rot_div_comp}.

\bibliographystyle{plain}
\bibliography{ref_LSW}

@article {ABD98,
    AUTHOR = {Amrouche, C. and Bernardi, C. and Dauge, M. and Girault, V.},
     TITLE = {Vector potentials in three-dimensional non-smooth domains},
   JOURNAL = {Math. Methods Appl. Sci.},
  FJOURNAL = {Mathematical Methods in the Applied Sciences},
    VOLUME = {21},
      YEAR = {1998},
    NUMBER = {9},
     PAGES = {823--864},
      ISSN = {0170-4214,1099-1476},
   MRCLASS = {35J25 (35Q30 65N30 76D07)},
  MRNUMBER = {1626990},
MRREVIEWER = {Juha\ H.\ Videman},
       DOI =
              {10.1002/(SICI)1099-1476(199806)21:9<823::AID-MMA976>3.0.CO;2-B},
       URL =
              {https://doi.org/10.1002/(SICI)1099-1476(199806)21:9<823::AID-MMA976>3.0.CO;2-B},
}

@article {BPS16,
    AUTHOR = {Bauer, S. and Pauly, D. and Schomburg, M.},
     TITLE = {The {M}axwell compactness property in bounded weak {L}ipschitz
              domains with mixed boundary conditions},
   JOURNAL = {SIAM J. Math. Anal.},
  FJOURNAL = {SIAM Journal on Mathematical Analysis},
    VOLUME = {48},
      YEAR = {2016},
    NUMBER = {4},
     PAGES = {2912--2943},
      ISSN = {0036-1410,1095-7154},
   MRCLASS = {35Q61 (35A23)},
  MRNUMBER = {3542004},
MRREVIEWER = {Marcus\ Waurick},
       DOI = {10.1137/16M1065951},
       URL = {https://doi.org/10.1137/16M1065951},
}

@book {cioranescu_donato,
    AUTHOR = {Cioranescu, D. and Donato, P.},
     TITLE = {An introduction to homogenization},
    SERIES = {Oxford Lecture Series in Mathematics and its Applications},
    VOLUME = {17},
 PUBLISHER = {The Clarendon Press, Oxford University Press, New York},
      YEAR = {1999},
     PAGES = {x+262},
      ISBN = {0-19-856554-2},
   MRCLASS = {35B27 (35J20 35K05 35L05 49J45 74-02 74Qxx 78M40)},
  MRNUMBER = {1765047},
MRREVIEWER = {Eugenia\ P\'erez},
}

@article {Cos91,
    AUTHOR = {Costabel, M.},
     TITLE = {A coercive bilinear form for {M}axwell's equations},
   JOURNAL = {J. Math. Anal. Appl.},
  FJOURNAL = {Journal of Mathematical Analysis and Applications},
    VOLUME = {157},
      YEAR = {1991},
    NUMBER = {2},
     PAGES = {527--541},
      ISSN = {0022-247X,1096-0813},
   MRCLASS = {35Q60 (78A25)},
  MRNUMBER = {1112332},
       DOI = {10.1016/0022-247X(91)90104-8},
       URL = {https://doi.org/10.1016/0022-247X(91)90104-8},
}

@article {GSS21,
    AUTHOR = {Gallistl, D. and Sprekeler, T. and S\"{u}li, E.},
     TITLE = {Mixed {F}inite {E}lement {A}pproximation of {P}eriodic
              {H}amilton--{J}acobi--{B}ellman {P}roblems {W}ith
              {A}pplication to {N}umerical {H}omogenization},
   JOURNAL = {Multiscale Model. Simul.},
  FJOURNAL = {Multiscale Modeling \& Simulation. A SIAM Interdisciplinary
              Journal},
    VOLUME = {19},
      YEAR = {2021},
    NUMBER = {2},
     PAGES = {1041--1065},
      ISSN = {1540-3459},
   MRCLASS = {35B27 (35J60 65N12 65N15 65N30)},
  MRNUMBER = {4272900},
       DOI = {10.1137/20M1371397},
       URL = {https://doi.org/10.1137/20M1371397},
}

@article {GS19,
    AUTHOR = {Gallistl, D. and S\"{u}li, E.},
     TITLE = {Mixed finite element approximation of the
              {H}amilton-{J}acobi-{B}ellman equation with {C}ordes
              coefficients},
   JOURNAL = {SIAM J. Numer. Anal.},
  FJOURNAL = {SIAM Journal on Numerical Analysis},
    VOLUME = {57},
      YEAR = {2019},
    NUMBER = {2},
     PAGES = {592--614},
      ISSN = {0036-1429},
   MRCLASS = {65N30 (65N12 65N15 65N50)},
  MRNUMBER = {3924618},
       DOI = {10.1137/18M1192299},
       URL = {https://doi.org/10.1137/18M1192299},
}

@book {Gri85,
    AUTHOR = {Grisvard, P.},
     TITLE = {Elliptic problems in nonsmooth domains},
    SERIES = {Monographs and Studies in Mathematics},
    VOLUME = {24},
 PUBLISHER = {Pitman (Advanced Publishing Program), Boston, MA},
      YEAR = {1985},
     PAGES = {xiv+410},
      ISBN = {0-273-08647-2},
   MRCLASS = {35J25 (35-02)},
  MRNUMBER = {775683},
MRREVIEWER = {P.\ Szeptycki},
}

@misc{HMS26,
Author = {Hauck, M. and Maier, R. and Sprekeler, T.},
Title = {A post-processed higher-order multiscale method for nondivergence-form elliptic equations},
Year = {2026},
Note = {arXiv:2604.15144},
Eprint = {arXiv:2604.15144},
}

@book {Leis1986,
    AUTHOR = {Leis, R.},
     TITLE = {Initial-boundary value problems in mathematical physics},
 PUBLISHER = {B. G. Teubner, Stuttgart; John Wiley \& Sons, Ltd.,
              Chichester},
      YEAR = {1986},
     PAGES = {viii+266},
      ISBN = {3-519-02102-1},
   MRCLASS = {35-02},
  MRNUMBER = {841971},
MRREVIEWER = {O.\ John},
       DOI = {10.1007/978-3-663-10649-4},
       URL = {https://doi.org/10.1007/978-3-663-10649-4},
}

@article {Mitrea2001,
    AUTHOR = {Mitrea, M.},
     TITLE = {Dirichlet integrals and {G}affney-{F}riedrichs inequalities in
              convex domains},
   JOURNAL = {Forum Math.},
  FJOURNAL = {Forum Mathematicum},
    VOLUME = {13},
      YEAR = {2001},
    NUMBER = {4},
     PAGES = {531--567},
      ISSN = {0933-7741,1435-5337},
   MRCLASS = {35B65 (35B45 35P15 46E35 46N20)},
  MRNUMBER = {1830246},
MRREVIEWER = {Florin\ Iacob},
       DOI = {10.1515/form.2001.021},
       URL = {https://doi.org/10.1515/form.2001.021},
}

@article {Pau19,
    AUTHOR = {Pauly, D.},
     TITLE = {On the {M}axwell and {F}riedrichs/{P}oincar\'e{} constants in
              {ND}},
   JOURNAL = {Math. Z.},
  FJOURNAL = {Mathematische Zeitschrift},
    VOLUME = {293},
      YEAR = {2019},
    NUMBER = {3-4},
     PAGES = {957--987},
      ISSN = {0025-5874,1432-1823},
   MRCLASS = {35Q60 (35A23 35Q61 46E40 53A45 78A25)},
  MRNUMBER = {4024573},
MRREVIEWER = {Piero\ Antonio\ D'Ancona},
       DOI = {10.1007/s00209-018-2218-7},
       URL = {https://doi.org/10.1007/s00209-018-2218-7},
}

@incollection{PaulyWaurick2026,
  author    = {Pauly, D. and Waurick, M.},
  title     = {Gaffney's Inequality and the Closed Range Property for the de Rham Complex in Unbounded Domains},
  booktitle = {PDEs, Operator Theory, and Mathematical Physics: A Tribute to the Scientific Work of Rainer Picard on the Occasion of His 80th Birthday},
  editor    = {Trostorff, Sascha and Waurick, Marcus},
  publisher = {Birkh{\"a}user},
  year      = {2026},
  url       = {https://arxiv.org/abs/2602.00581}
}

@article {Picard1984,
    AUTHOR = {Picard, R.},
     TITLE = {An elementary proof for a compact imbedding result in
              generalized electromagnetic theory},
   JOURNAL = {Math. Z.},
  FJOURNAL = {Mathematische Zeitschrift},
    VOLUME = {187},
      YEAR = {1984},
    NUMBER = {2},
     PAGES = {151--164},
      ISSN = {0025-5874,1432-1823},
   MRCLASS = {35Q20 (58G25 78A25)},
  MRNUMBER = {753428},
MRREVIEWER = {Rolf\ Leis},
       DOI = {10.1007/BF01161700},
       URL = {https://doi.org/10.1007/BF01161700},
}

@article {Sar82,
    AUTHOR = {Saranen, J.},
     TITLE = {On an inequality of {F}riedrichs},
   JOURNAL = {Math. Scand.},
  FJOURNAL = {Mathematica Scandinavica},
    VOLUME = {51},
      YEAR = {1982},
    NUMBER = {2},
     PAGES = {310--322},
      ISSN = {0025-5521,1903-1807},
   MRCLASS = {35B45 (35F15 35Q20)},
  MRNUMBER = {690534},
MRREVIEWER = {Rolf\ Leis},
       DOI = {10.7146/math.scand.a-11983},
       URL = {https://doi.org/10.7146/math.scand.a-11983},
}

@book {Sch12,
    AUTHOR = {Schm\"udgen, K.},
     TITLE = {Unbounded self-adjoint operators on {H}ilbert space},
    SERIES = {Graduate Texts in Mathematics},
    VOLUME = {265},
 PUBLISHER = {Springer, Dordrecht},
      YEAR = {2012},
     PAGES = {xx+432},
      ISBN = {978-94-007-4752-4},
   MRCLASS = {47-01 (47B25 47E05)},
  MRNUMBER = {2953553},
MRREVIEWER = {G.\ V.\ Rozenblum},
       DOI = {10.1007/978-94-007-4753-1},
       URL = {https://doi.org/10.1007/978-94-007-4753-1},
}

@article {Spr24,
    AUTHOR = {Sprekeler, T.},
     TITLE = {Homogenization of nondivergence-form elliptic equations with
              discontinuous coefficients and finite element approximation of
              the homogenized problem},
   JOURNAL = {SIAM J. Numer. Anal.},
  FJOURNAL = {SIAM Journal on Numerical Analysis},
    VOLUME = {62},
      YEAR = {2024},
    NUMBER = {2},
     PAGES = {646--666},
      ISSN = {0036-1429,1095-7170},
   MRCLASS = {65N12 (35B27 35J25 65N30)},
  MRNUMBER = {4711961},
       DOI = {10.1137/23M1580279},
       URL = {https://doi.org/10.1137/23M1580279},
}

@misc{Spr26,
Author = {Sprekeler, T.},
Title = {A {C}ordes framework for stationary {F}okker--{P}lanck--{K}olmogorov equations},
Year = {2026},
Note = {arXiv:2601.14548},
Eprint = {arXiv:2601.14548},
}

@article {SSZ25,
    AUTHOR = {Sprekeler, T. and S\"uli, E. and Zhang, Z.},
     TITLE = {Finite element approximation of stationary
              {F}okker-{P}lanck-{K}olmogorov equations with application to
              periodic numerical homogenization},
   JOURNAL = {SIAM J. Numer. Anal.},
  FJOURNAL = {SIAM Journal on Numerical Analysis},
    VOLUME = {63},
      YEAR = {2025},
    NUMBER = {3},
     PAGES = {1315--1343},
      ISSN = {0036-1429,1095-7170},
   MRCLASS = {65N30 (35B27 35J15 65N12 65N15)},
  MRNUMBER = {4921424},
       DOI = {10.1137/24M1692848},
       URL = {https://doi.org/10.1137/24M1692848},
}

@article {Waurick2025,
    AUTHOR = {Waurick, M.},
     TITLE = {Nonlocal {$H$}-convergence for topologically nontrivial
              domains},
   JOURNAL = {J. Funct. Anal.},
  FJOURNAL = {Journal of Functional Analysis},
    VOLUME = {288},
      YEAR = {2025},
    NUMBER = {3},
     PAGES = {Paper No. 110710, 49},
      ISSN = {0022-1236,1096-0783},
   MRCLASS = {35B27 (35Q61 74Q05)},
  MRNUMBER = {4815906},
MRREVIEWER = {Mustapha\ El Jarroudi},
       DOI = {10.1016/j.jfa.2024.110710},
       URL = {https://doi.org/10.1016/j.jfa.2024.110710},
}

@article {Weber1980,
    AUTHOR = {Weber, C.},
     TITLE = {A local compactness theorem for {M}axwell's equations},
   JOURNAL = {Math. Methods Appl. Sci.},
  FJOURNAL = {Mathematical Methods in the Applied Sciences},
    VOLUME = {2},
      YEAR = {1980},
    NUMBER = {1},
     PAGES = {12--25},
      ISSN = {0170-4214,1099-1476},
   MRCLASS = {78A25 (35P25 78A45)},
  MRNUMBER = {561375},
MRREVIEWER = {D.\ L.\ Colton},
       DOI = {10.1002/mma.1670020103},
       URL = {https://doi.org/10.1002/mma.1670020103},
}

@article {Weck1974,
    AUTHOR = {Weck, N.},
     TITLE = {Maxwell's boundary value problem on {R}iemannian manifolds
              with nonsmooth boundaries},
   JOURNAL = {J. Math. Anal. Appl.},
  FJOURNAL = {Journal of Mathematical Analysis and Applications},
    VOLUME = {46},
      YEAR = {1974},
     PAGES = {410--437},
      ISSN = {0022-247X},
   MRCLASS = {78.35 (53C20 58G99)},
  MRNUMBER = {343771},
MRREVIEWER = {N.\ D.\ Sengupta},
       DOI = {10.1016/0022-247X(74)90250-9},
       URL = {https://doi.org/10.1016/0022-247X(74)90250-9},
}

\end{document}